\DeclareMathAlphabet{\mathpzc}{OT1}{pzc}{m}{it}
\newtheorem{thm}{Theorem}
\newtheorem{cor}[thm]{Corollary}
\newtheorem{lem}[thm]{Lemma}
\newtheorem{defn}[thm]{Definition}
\newcommand{\ff}[1]{{\mathbb F}_{#1}}
\newcommand{\ffs}[1]{{\mathbb F}_{#1}^\star}
\newcommand{\ffx}[1]{\ff{#1}[X]}
\newcommand{\ffxy}[1]{\ff{#1}[X,Y]}
\newcommand{\ffxyz}[1]{\ff{#1}[X,Y,Z]}
\newcommand{\ffxi}[2]{
  \ifnum#2=0{ {\mathbb F}_{#1}[X_1,\ldots,X_n] }
  \else{
    \ifnum#2=1{ \ffx{#1} }
    \else{ \ifnum#2=2{ {\mathbb F}_{#1}[X_1,X_2] }
      \else{ {\mathbb F}_{#1}[X_1,\ldots,X_{#2}] }
           \fi}
    \fi}
  \fi}
\DeclareMathOperator{\ptx}{\bf x}
\DeclareMathOperator{\pty}{\bf y}
\DeclareMathOperator{\pto}{\bf O}
\DeclareMathOperator{\pti}{\bf I}
\DeclareMathOperator{\ptj}{\bf J}
\DeclareMathOperator{\pta}{\bf p}
\DeclareMathOperator{\ptb}{\bf q}
\DeclareMathOperator{\ptc}{\bf r}
\newcommand{\lne}{\mathcal L}
\newcommand{\mne}{\mathcal M}
\newcommand{\pplane}{\mathpzc P}
\newcommand{\aplane}{\mathpzc A}
\newcommand{\splane}{\mathpzc S}
\newcommand{\K}{\mathcal K}
\newcommand{\T}{\mathcal T}
\newcommand{\R}{\mathcal R}
\newcommand{\Sset}{\mathcal S}
\newcommand{\Rs}{{\mathcal R}^\star}
\begin{document}
\title{On coordinatising planes of prime power order using finite fields}

\author{Robert S. Coulter\\
\\
520 Ewing Hall\\
Department of Mathematical Sciences\\
University of Delaware\\
Newark, DE, USA, 19716}

\maketitle

\begin{abstract}
We revisit the coordinatisation method for projective planes.
First, we discuss how the behaviour of the additive and multiplicative
loops can be described in terms of its action on the ``vertical" line, and
how this means one can coordinatise certain planes in an optimal sense.
We then move to consider projective planes of prime power order only.
Specifically, we consider how coordinatising planes of prime power order using
finite fields as the underlying labelling set leads to some general
restrictions on
the form of the resulting planar ternary ring (PTR) when viewed as a trivariate
polynomial over the field.
We also consider the Lenz-Barlotti type of the plane being coordinatised,
deriving further restrictions on the form of the PTR polynomial.
\end{abstract}

\section{Introduction}

This paper is concerned with two interlinked areas in the study of
projective planes -- namely the coordinatisation method and the Lenz-Barlotti
classification.
The coordinatisation method takes an arbitrary projective plane and produces a
trivariate function known as a {\em planar ternary ring (PTR)} over whatever
set is used as the labelling set during the coordinatisation process.
The Lenz-Barlotti (LB) classification is a coarse classification system for
affine and projective planes centred on the transitive behaviour exhibited by
the full automorphism group of the plane.

We begin by outlining the coordinatisation method using slightly non-standard
diagrams, and describe how this leads to the concept of a planar ternary ring
(PTR).
From the PTR so constructed it is common to define an ``additive" and a
``multiplicative" loop.
Through the use of our diagrams, we can give an explicit description of
the action of these loops on the vertical line.
We have dual motivations in this initial discussion.
In the long term, our motivation is a desire to give a meaningful definition of
``optimal coordinatisation", and this is achieved through our understanding of
these actions.
In the short term, our motivation is to give an additional insight into a well
known conjecture in projective geometry concerning Fano configurations.
It has been known for some time that a finite Desarguesian plane contains a
Fano configuration if and only if the plane has even order.
For non-Desarguesian planes, a folk-lore conjecture claims that all
finite non-Desarguesian planes must contain a Fano configuration.
(Though the conjecture has been attributed to Hanna Neumann, she did not
make the conjecture.)
In support of the conjecture, several classes of planes have been shown to
contain Fano configurations; for a non-exhaustive set of examples see Neumann
\cite{neu54}, Rahilly \cite{rah73}, Johnson \cite{joh07}, or Petrak
\cite{pet10}.
Here we note that the action of the additive loop on the vertical line provides
an obvious necessary and sufficient condition for the existence of a Fano
configuration in a projective plane, though the utility of these conditions to
prove the conjecture is unclear.

We then concentrate on the coordinatisation of projective planes of prime power
order.
(Of course, anyone who believes the prime-power conjecture is true would view
this as no restriction at all; the present author is not willing to express any
view on that conjecture's validity, at least not in print!)
Specifically, we here instigate a study of projective planes of prime power
order via their coordinatisation over finite fields of the appropriate order.
In this way we are able to view the resulting PTR as a reduced trivariate
polynomial over a finite field, what we call a {\em PTR polynomial}.
We then derive restrictions on the form of the PTR polynomial using the
functional properties that any PTR must exhibit.
As shall be seen, several forms of reduced permutation polynomials and
$\kappa$-polynomials (both of which we shall define below) naturally arise from
this relation.
The culmination of the results of this section, and the main statement in this
general situation, is given in Theorem \ref{ptrpolyform}.

Finally, we outline the Lenz-Barlotti classification system for
projective planes.
It is generally well known that knowledge of the Lenz-Barlotti type of a
projective plane $\pplane$ can lead to additional algebraic properties of the
PTR obtained from coordinatising it, but this only occurs when some effort is
made to coordinatise the plane in an optimal way.
We make explicit what we mean by optimal coordinatisation, and utilise
this concept to obtain further restrictions on the form of the PTR polynomial
under various assumptions concerning the LB type.
In particular, we show how one can coordinatise suitable planes so that
either the additive or multiplicative loop resulting from the coordinatisation
is exactly the same as its corresponding field operation, and consider how
this can affect the form of the PTR polynomial.
Theorems \ref{realaddition} and \ref{realmultiplication} are the main results
of this section.

The paper is set out as follows.
In Section \ref{coordmethod}, we give an overview of the coordinatisation
approach,
and discuss the actions of the loops on the vertical line. There we also
discuss the conjecture on the existence of Fano configurations.
In Section \ref{fqcoord} we restrict the coordinatisation process to planes of
prime power order and where the coordinatising set used is a finite field,
and describe explicitly how a PTR polynomial is produced.
Section \ref{ptrpolys} then provides a sequence of results on the behaviour and
form of PTR polynomials.
In the final section, we turn to a discussion of the Lenz-Barlotti
classification for projective planes and affine planes.
There we make explicit the concept of an ``optimal" coordinatisation of a
plane, and exploit this idea to produce further restrictions on the PTR
polynomial based on knowledge of the LB type of the projective plane when
coordinatised optimally.

\section{Coordinatisation} \label{coordmethod}

The method of coordinatisation has been used now for over seventy years.
There are at least 3 standard coordinatisation methods. Though they are all
essentially equivalent, they produce slightly different properties in the
resulting PTRs.
For the sake of consistency, we shall use the process outlined by Hughes and
Piper in \cite{bhughes73}, Chapter 5 -- they give the two other methods at the
end of that same chapter.
In this section we will describe precisely the coordinatisation method for
introducing a coordinate system for an abstract projective plane.
While there are several readily available sources for describing this method,
our motivation for providing another treatment is twofold: firstly, there is
the desire for a self-contained discussion, and secondly,
we will use diagrams which are not standard elsewhere with the explicit
aim of making it easier to visualise certain concepts we wish to discuss.

Let $\pplane$ be a projective plane of order $n$ and let $\R$ be any set of
cardinality $n$ -- this set along with the symbol $\infty$ will be all that is
required to produce a coordinate system for the plane.
We designate two special elements of $\R$ by $0$ and $1$ for reasons which will
become clear. We now proceed to coordinatise $\pplane$.
\begin{itemize}
\item Choose any triangle in the plane $\pto,\ptx,\pty$.
Label $\pto=(0,0)$, $\ptx=(0)$ and $\pty=(\infty)$ -- by doing so we have now
determined the ``line at infinity" $\overline{\ptx\pty}=[\infty]$.
We also set $[0]=\overline{\pto\pty}$ and $[0,0]=\overline{\pto\ptx}$.
(The process for an affine plane $\aplane$ differs from the projective version
only in that the line at infinity is pre-determined, so that the choice of
points $\ptx$ and $\pty$ is restricted.)

\item A fourth point, $\pti$, not collinear with any two of $\pto, \ptx, \pty$
is now chosen and labelled $\pti=(1,1)$.

\item To finalise the initialisation process, we label some obvious
intersection points:
\begin{itemize}
\item Set $\overline{\ptx\pti}\cap [0] =(0,1)$.
\item Set $\overline{\pty\pti}\cap [0,0] =(1,0)$.
\item Set $\overline{(1,0)(0,1)}\cap [\infty]= \ptj=(1)$.
\end{itemize}
\end{itemize}
The situation after this initial phase is given in Figure 1.
\begin{center}
\begin{tikzpicture}[domain=-2:12]
\coordinate 	(O) at (0,0);
\coordinate 	(X) at (10,0);
\coordinate 	(Y) at (0,5);
\coordinate 	(Zero1) at (0,1.5);
\coordinate 	(One0) at (2,0);

\draw		(O) circle [radius=3pt] node[anchor=north east]{$\pto$};
\draw		(X) circle [radius=3pt] node[anchor=north]{$\ptx=(0)$};
\draw		(Y) circle [radius=3pt] node[anchor=east]{$\pty=(\infty)$};
\draw[fill]	(Zero1) circle [radius=3pt] node[anchor=east]{$(0,1)$};
\draw[fill]	(One0) circle [radius=3pt] node[anchor=north]{$(1,0)$};

\draw[thick]	(-1,0) -- (11,0);
\draw[thick]	(0,-1) -- (0,6);

\draw[thick, name path=XY]	(Y) .. controls (14/3,14/3) and (8,3) .. (X);

\path[name path=Zero1J] (Zero1) parabola bend (One0) (8.5,2.2);

\draw[dashed, very thick, name intersections={of=XY and Zero1J, by={J}}]
	(Zero1) parabola bend (One0) (J);

\draw[fill]	(J) circle [radius=3pt] node[anchor=south west]{$\ptj=(1)$};

\draw[dashed, very thick, name path=Zero1X]	(X) .. controls (3,3) and (2,2.5) .. (Zero1);
\draw[dashed, very thick, name path=One0Y]	(Y) .. controls (2.7,3) and (2.5,1.5) .. (One0);

\draw[name intersections={of=Zero1X and One0Y, by={I}}]
		(I) circle [radius=3pt] node[anchor=south west]{$\pti=(1,1)$};

\end{tikzpicture}

Figure 1: After the initial labelling.
\end{center}
At this point, we have labelled 3 of the $n+1$ points of both of the lines
$[0]$ and $[0,0]$.
One may now label the remaining $n-2$ points of $[0]$ as $(0,a)$ in arbitrary
way using the remaining $n-2$ elements $a\in\R\setminus\{0,1\}$.
This is the last remaining freedom of choice in the process, as from this stage
onwards, the coordinates of all points and lines are totally determined.
%%We shall return to this freedom of choice regarding the labelling of $[0]$ in
%%the last section of the article
Later in this section we will explain how
the additive and multiplicative loops that result from the coordinatising
procedure can be seen to act on $\overline{\pto\pty}$, thus outlining how the
elements of $\R$ interact under these operations follows from this random
labelling.

We now proceed to label all points and lines of the plane; see Figure 2.
\begin{itemize}
\item To label the remaining points of $[0,0]$ we set
$\overline{(0,a)\ptj}\cap [0,0] = (a,0)$.
\item To label the remaining points of $[\infty]$ we set
$\overline{(0,a)(1,0)}\cap [\infty] = (a)$.
\item To label the remaining ``affine" points we set
$\overline{(a,0)\pty)}\cap\overline{(0,b)\ptx}=(a,b)$.
\end{itemize}
\begin{center}
\begin{tikzpicture}[domain=-2:12]
\coordinate 	(O) at (0,0);
\coordinate 	(X) at (10,0);
\coordinate 	(Y) at (0,5);
\coordinate 	(Zero1) at (0,1.5);
\coordinate 	(One0) at (2,0);
\coordinate	(Zeroa) at (0,2);
\coordinate	(Zerob) at (0,3.5);
\coordinate	(A0) at (5,0);

\draw		(O) circle [radius=3pt] node[anchor=north east]{$\pto$};
\draw		(X) circle [radius=3pt] node[anchor=north]{$\ptx=(0)$};
\draw		(Y) circle [radius=3pt] node[anchor=east]{$\pty=(\infty)$};
\draw		(One0) circle [radius=3pt] node[anchor=north]{$(1,0)$};
\draw[fill]	(A0) circle [radius=3pt] node[anchor=north]{$(a,0)$};
\draw		(Zeroa) circle [radius=3pt] node[anchor=east]{$(0,a)$};
\draw		(Zerob) circle [radius=3pt] node[anchor=east]{$(0,b)$};

\draw[thick]	(-1,0) -- (11,0);
\draw[thick]	(0,-1) -- (0,6);

\draw[thick, name path=XY]	(Y) .. controls (14/3,14/3) and (8,3) .. (X);

\path[name path=Zero1J] (Zero1) parabola bend (One0) (8.5,2.2);

\path[name path=ZeroaA] (Zeroa) parabola bend (One0) (5,4.5);

\path[thick, name intersections={of=XY and Zero1J, by={J}}]
	(Zero1) parabola bend (One0) (J);

\draw[dashed, very thick, name intersections={of=XY and ZeroaA, by={A}}]
	(Zeroa) parabola bend (One0) (A);

\draw		(J) circle [radius=3pt] node[anchor=south west]{$\ptj=(1)$};

\draw[fill]	(A) circle [radius=3pt] node[anchor=south west]{$(a)$};

\draw[dashed, very thick, name path=ZerobX]	(Zerob) .. controls (11/3,11/3) and (7,2) .. (X);

\draw[dashed, very thick, name path=A0Y]	(Y) .. controls (13/6,23/6) and (23/6,13/6) .. (A0);
		
\draw[fill, name intersections={of=A0Y and ZerobX, by={AB}}]
		(AB) circle [radius=3pt] node[anchor=south west]{$(a,b)$};
		
\draw[dashed, very thick]	(Zeroa) parabola bend (A0) (J);

\end{tikzpicture}

Figure 2: Point labelling.
\end{center}
With a labelling of the points complete, it remains only to give a labelling
of the lines (Figure 3).
\begin{itemize}
\item To label the ``vertical'' lines we set $\overline{(a,0)\pty} = [a]$.
\item To label the ``lines of slope $m$" we set $\overline{(m)(0,k)}= [m,k]$.
\end{itemize}
\begin{center}
\begin{tikzpicture}[>=stealth,domain=-2:12]
\coordinate 	(O) at (0,0);
\coordinate 	(X) at (10,0);
\coordinate 	(Y) at (0,5);
\coordinate 	(Zero1) at (0,1.5);
\coordinate 	(One0) at (2,0);
\coordinate	(Zeroa) at (0,2);
\coordinate	(Zerob) at (0,3.5);
\coordinate	(A0) at (5,0);
\coordinate	(Zerok) at (0,2.5);

\draw		(O) circle [radius=3pt] node[anchor=north east]{$\pto$};
\draw		(X) circle [radius=3pt] node[anchor=north]{$\ptx=(0)$};
\draw		(Y) circle [radius=3pt] node[anchor=east]{$\pty=(\infty)$};
\draw		(A0) circle [radius=3pt] node[anchor=north]{$(a,0)$};
\draw		(Zerok) circle [radius=3pt] node[anchor=east]{$(0,k)$};

\draw[thick]	(-1,0) -- (11,0);
\draw[thick]	(0,-1) -- (0,6);
\draw[thick, name path=XY]	(Y) .. controls (14/3,14/3) and (8,3) .. (X);

\path[name path=ZerokM] (Zerok) parabola bend (Zerok) (7,4);

\draw[very thick, dashed, name intersections={of=XY and ZerokM, by={M}}]
	(Zerok) parabola bend (Zerok) (M);

\draw		(M) circle [radius=3pt] node[anchor=south west]{$(m)$};

\draw[very thick, dashed, name path=A0Y]
	(Y) .. controls (13/6,23/6) and (23/6,13/6) .. (A0);

\path[name path=Path1]	(8,2) -- (10,2);
		
\draw[thick,->,name intersections={of=XY and Path1, by={pt1}}]
	(8.8,2.4) node[anchor=west]{$[\infty]$} -- (pt1);

\draw[thick,<-]	(2,0) -- (2,-0.7) node[anchor=north]{$[0,0]$};

\draw[thick,<-]	(0,1) -- (-0.7,1) node[anchor=east]{$[0]$};

\path[name path=Path2]	(5,1.2) -- (4,1.2);

\draw[thick,<-, name intersections={of=Path2 and A0Y,by={pt2}}]
	(pt2) -- (5,1.2) node[anchor=west]{$[a]$};

\path[name path=Path3]	(4.7,2) -- (4.7,4);

\draw[thick,<-, name intersections={of=Path3 and ZerokM,by={pt3}}]
	(pt3) -- (5.2,2.7) node[anchor=west]{$[m,k]$};

\end{tikzpicture}

Figure 3: Line labelling.
\end{center}
From this coordinatisation, one now defines a tri-variate function $T$ on $\R$,
called a {\em planar ternary ring (PTR)}, by setting $T(m,x,y)=k$ if and only
if $(x,y)\in [m,k]$.
This PTR will exhibit certain properties and is actually
equivalent to the projective plane as any three variable function exhibiting
those properties can be used to define a projective plane.
More precisely, we have the following important result, essentially due to Hall
\cite{hall43}; see also Hughes and Piper, \cite{bhughes73}, Theorem 5.1.
\begin{lem}[Hall, \cite{hall43}, Theorem 5.4] \label{hplem}
Let $\pplane$ be a projective plane of $n$ and $\R$ be any set of cardinality
$n$.
Let $T:\R^3\rightarrow \R$ be a PTR obtained from coordinatising $\pplane$.
Then $T$ must satisfy the following properties:
\begin{enumerate}
\renewcommand{\labelenumi}{(\alph{enumi})}
\item $T(a,0,z)=T(0,b,z)=z$ for all $a,b,z\in\R$.
\item $T(x,1,0)=x$ and $T(1,y,0)=y$ for all $x,y\in\R$.
\item If $a,b,c,d\in\R$ with $a\ne c$, then there exists a unique
$x$ satisfying $T(x,a,b)=T(x,c,d)$.
\item If $a,b,c\in\R$, then there is a unique $z$ satisfying
$T(a,b,z)=c$.
\item If $a,b,c,d\in\R$ with $a\ne c$, then there is a unique pair
$(y,z)$ satisfying $T(a,y,z)=b$ and $T(c,y,z)=d$.
\end{enumerate}
Conversely, any tri-variate function $T$ defined on $\R$ which satisfies
Properties (c) through (e)
can be used to define an affine plane $\aplane_T$ of order $q$ as
follows:
\begin{itemize}
\item the points of $\aplane$ are $(x,y)$, with $x,y\in\R$;
\item the lines of $\aplane$ are the symbols $[m,a]$, with $m,a\in\R$,
defined by
\begin{equation*}
[m,a] = \{ (x,y)\in\R\times\R \,:\, a = T(m,x,y) \},
\end{equation*}
and the symbols $[c]$, with $c\in\R$, defined by
\begin{equation*}
[c] = \{ (c,y) \,:\, y\in\R \}.
\end{equation*}
\end{itemize}
\end{lem}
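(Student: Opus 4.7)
The defining equivalence $T(m,x,y)=k \iff (x,y)\in[m,k]$ turns each of (a)--(e) into an incidence claim that can be read off from the coordinatisation rules. For (a) the claims are $(0,z)\in[a,z]$ (immediate, since $[a,z]=\overline{(a)(0,z)}$) and $(b,z)\in[0,z]$ (immediate, since $[0,z]=\overline{\ptx(0,z)}$ is the ``horizontal'' line through $(0,z)$, on which $(b,z)=\overline{(b,0)\pty}\cap\overline{(0,z)\ptx}$ lies by the rule used to label affine points). For (b), the labelling $(a)=\overline{(0,a)(1,0)}\cap[\infty]$ puts $(1,0)\in\overline{(a)(0,a)}=[a,a]$, and the labelling $(a,0)=\overline{(0,a)\ptj}\cap[0,0]$ puts $(a,0)\in\overline{\ptj(0,a)}=[1,a]$. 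Properties (c), (d), (e) translate respectively into: two distinct affine points lie on a unique non-vertical line; every non-vertical line meets every vertical line in exactly one point; and two non-vertical lines of distinct slopes meet in a unique affine point. Each is an incidence axiom of $\pplane$ specialised to our labelled geometry.

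\textbf{Converse.} I would verify the standard affine plane axioms for $\aplane_T$ by case analysis on line types, using (c)--(e) to resolve each case.
\begin{itemize}
\item Two distinct points $(x_1,y_1),(x_2,y_2)$ lie on a unique line: if $x_1=x_2$ only $[x_1]$ works, since (d) prevents any $[m,a]$ from containing two points sharing a first coordinate; if $x_1\ne x_2$, (c) yields a unique slope $m$, after which $a:=T(m,x_1,y_1)$ is forced.
\item Playfair's postulate: the line through $P=(p_1,p_2)\notin L$ parallel to $L$ is $[p_1]$ if $L=[c]$, and $[m,T(m,p_1,p_2)]$ if $L=[m,a]$, uniqueness of the latter coming from (d). That a vertical and a non-vertical line always meet in exactly one point follows again from (d); two non-vertical lines of distinct slopes meet in exactly one affine point by (e).
\item A non-collinear triple exists: take $(r,r),(r,s),(s,r)$ for any distinct $r,s\in\R$, non-collinearity reducing to the same injectivity supplied by (d).
\end{itemize}
A simple count then gives $|\R|^2$ points, $|\R|$ points on each line, and $|\R|+1$ lines through each point, so $\aplane_T$ has order $|\R|$.

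\textbf{Obstacle.} There is no conceptual obstacle, only careful bookkeeping: in the converse one must ensure each pairing of line types (vertical/vertical, vertical/non-vertical, non-vertical/non-vertical) is dispatched by the correct property among (c)--(e). It is instructive that (a) and (b) play no role in the converse; they encode the particular normalisation tied to the choice of $\pti=(1,1)$ rather than any incidence property of the plane itself.
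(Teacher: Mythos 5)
The paper does not actually prove this lemma: it is quoted as a known result of Hall, with the reader referred to Hughes and Piper \cite{bhughes73}, Theorem 5.1, so there is no in-paper argument to compare against. Your proposal is the standard textbook argument and, as an outline, it is correct. In the forward direction your translations of (a)--(e) into incidence statements via $T(m,x,y)=k \iff (x,y)\in[m,k]$ all check out against the labelling rules of Section \ref{coordmethod}: for instance $[a,z]=\overline{(a)(0,z)}$ trivially contains $(0,z)$, and $(y,0)=\overline{(0,y)\ptj}\cap[0,0]$ lies on $[1,y]=\overline{\ptj(0,y)}$ by construction, exactly as you say. In the converse your case analysis is the right one; the only point worth tightening is the Playfair bullet, where uniqueness of the parallel to a non-vertical line $[m,a]$ through $P$ requires \emph{both} (d) (the vertical line through $P$ meets $[m,a]$) and (e) (any non-vertical line of different slope through $P$ meets $[m,a]$) --- you state both facts in the next clause, but the attribution ``uniqueness of the latter coming from (d)'' alone is slightly misleading. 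Your closing observation that (a) and (b) are normalisations tied to the choice of $\pti=(1,1)$ rather than incidence axioms is exactly the point the paper makes when it defines a \emph{weak PTR} as one satisfying only (c)--(e).
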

Since one only needs Properties (c) through (e) to construct $\pplane$, a
polynomial satisfying just the latter three properties is called a {\em weak
PTR}.
If a weak PTR also satisfies (a) (resp. (b)), then it is a
{\em weak PTR with zero (resp. weak PTR with unity)}.

It is customary to define an addition $\oplus$ and multiplication $\odot$ by
\begin{align*}
x\oplus y &= T(1,x,y),\\
x\odot y &= T(x,y,0),
\end{align*}
for all $x,y\in\R$.
It is well known that the properties of the plane guarantee that both
$\oplus$ and $\odot$ are loops with identities 0 and 1 over $\R$ and $\Rs$,
respectively.
A PTR is called {\em linear} over $\R$ if $T(x,y,z)=(x\odot y)\oplus z$
for all $x,y,z\in\R$ -- that is, if $T$ can be reconstructed from only knowing
the operations $\oplus$ and $\odot$.
One point of interest here is how the operations $\oplus$ and $\odot$ act on
the vertical line $[0]=\overline{\pto\pty}$; we shall outline this action
directly.
Before doing so, we mention an important example.
Consider the polynomial $T(X,Y,Z)=XY+Z$.
It is easily checked that the polynomial $T$ is a linear PTR over
any field $\K$; it defines the Desarguesian plane in every case.
It cannot be over emphasised that the same plane can yield many different PTRs
as choosing different quadrangles as the reference points $\pto, \ptx, \pty$
and $\pti$, may yield very different PTRs.
We discuss this further in Section \ref{LBsectionpt1}.

\subsection{The action of $(\R,\oplus)$ on $\overline{\pto\pty}$}

Let us first consider $(\R,\oplus)$. The process is anchored by our initial
triangle $\pto,\ptx,\pty$ and the point $\ptj=(1)$.
\begin{itemize}
\item Choose two points $(0,a),(0,b)$ on $\overline{\pto\pty}=[0]$.

\item Create the point $(a,0)=\overline{(0,a)\ptj}\cap \overline{\pto\ptx}$.

\item Next create the point
$(a,b)=\overline{(a,0)\pty)}\cap\overline{(0,b)\ptx}$.

\item Now consider the point
$(0,k)=\overline{\ptj(a,b)}\cap \overline{\pto\pty}$.
It lies on the line $[1,k]$ by construction. Furthermore, from the definition
of the PTR and $\oplus$ we see $k=T(1,a,b)=a\oplus b$.
Thus $(0,k)=(0,a\oplus b)$.
\end{itemize}
Pictorially, the action of $\oplus$ on the vertical line is seen in Figure 4.
\begin{center}
\begin{tikzpicture}[domain=-2:12]
\coordinate 	(O) at (0,0);
\coordinate 	(X) at (10,0);
\coordinate 	(Y) at (0,5);
\coordinate	(Zeroa) at (0,1);
\coordinate	(Zerob) at (0,3.5);
\coordinate	(A0) at (5,0);
\coordinate	(Zerok) at (0,2.5);

\draw		(O) circle [radius=3pt] node[anchor=north east]{$\pto$};
\draw		(X) circle [radius=3pt] node[anchor=north]{$\ptx=(0)$};
\draw		(Y) circle [radius=3pt] node[anchor=east]{$\pty=(\infty)$};
\draw		(A0) circle [radius=3pt] node[anchor=north]{$(a,0)$};
\draw		(Zeroa) circle [radius=3pt] node[anchor=east]{$(0,a)$};
\draw		(Zerob) circle [radius=3pt] node[anchor=east]{$(0,b)$};
\draw[fill]	(Zerok) circle [radius=3pt] node[anchor=east]{$(0,a\oplus b)$};

\draw[thick]	(-1,0) -- (11,0);
\draw[thick]	(0,-1) -- (0,6);

\draw[thick, name path=XY]	(Y) .. controls (14/3,14/3) and (8,3) .. (X);

\path[name path=Zero1J]		(Zero1) parabola bend (One0) (8.5,2.2);

\path[thick, name intersections={of=XY and Zero1J, by={J}}]
	(Zero1) parabola bend (One0) (J);

\draw		(J) circle [radius=3pt] node[anchor=south west]{$\ptj=(1)$};

\draw[dotted, thick, name path=ZerobX]
	(Zerob) .. controls (11/3,11/3) and (7,2) .. (X);

\draw[dotted, thick, name path=A0Y]
	(Y) .. controls (13/6,23/6) and (23/6,13/6) .. (A0);
		
\draw[name intersections={of=A0Y and ZerobX, by={AB}}]
	(AB) circle [radius=3pt] node[anchor=south west]{$(a,b)$};
		
\draw[dotted, thick]	(Zeroa) parabola bend (A0) (J);

\draw[dashed, very thick]	(Zerok) parabola bend (AB) (J);

\end{tikzpicture}

Figure 4: Action of the additive loop on the vertical line.
\end{center}

\subsection{The action of $(\Rs,\odot)$ on $\overline{\pto\pty}$}

As with the operation $\oplus$, the process by which the action of
$(\Rs,\odot)$ on $\overline{\pto\pty}$ is described relies on our initial
triangle $\pto,\ptx,\pty$ and the point $\ptj=(1)$. We'll also need the point
$(1,0)$.
\begin{itemize}
\item Choose two points $(0,a),(0,b)$ on $\overline{\pto\pty}=[0]$.

\item Create the point $(b,0)=\overline{(0,b)\ptj}\cap \overline{\pto\ptx}$.

\item Create the point $(a)=\overline{(0,a)(1,0)}\cap \overline{\ptx\pty}$.

\item Now consider the point
$(0,k)=\overline{(a)(b,0)}\cap \overline{\pto\pty}$.
It lies on the line $[a,k]$ by construction.
Furthermore, from the definitinon of the PTR and $\odot$ we see
$k=T(a,b,0)=a\odot b$. Thus $(0,)=(0,a\odot b)$.
\end{itemize}
This action is represented in Figure 5.
\begin{center}
\begin{tikzpicture}[domain=-2:12]
\coordinate 	(O) at (0,0);
\coordinate 	(X) at (10,0);
\coordinate 	(Y) at (0,5);
\coordinate 	(One0) at (2,0);
\coordinate	(Zeroa) at (0,1);
\coordinate	(Zerob) at (0,3.5);
\coordinate	(B0) at (4.2,0);
\coordinate	(Zerok) at (0,2.2);

\draw		(O) circle [radius=3pt] node[anchor=north east]{$\pto$};
\draw		(X) circle [radius=3pt] node[anchor=north]{$\ptx=(0)$};
\draw		(Y) circle [radius=3pt] node[anchor=east]{$\pty=(\infty)$};
\draw		(One0) circle [radius=3pt] node[anchor=north]{$(1,0)$};
\draw		(B0) circle [radius=3pt] node[anchor=north]{$(b,0)$};
\draw		(Zeroa) circle [radius=3pt] node[anchor=east]{$(0,a)$};
\draw		(Zerob) circle [radius=3pt] node[anchor=east]{$(0,b)$};
\draw[fill]	(Zerok) circle [radius=3pt] node[anchor=east]{$(0,a\odot b)$};

\draw[thick]	(-1,0) -- (11,0);
\draw[thick]	(0,-1) -- (0,6);

\draw[thick, name path=XY]	(Y) .. controls (14/3,14/3) and (8,3) .. (X);

\path[name path=Zero1J]		(Zero1) parabola bend (One0) (8.5,2.2);

%%\path[thick, name intersections={of=XY and Zero1J, by={J}}]
%%	(Zero1) parabola bend (One0) (J);

\draw[name intersections={of=XY and Zero1J, by={J}}]
		(J) circle [radius=3pt] node[anchor=south west]{$\ptj=(1)$};

\path[name path=ZeroaA]		(Zeroa) parabola bend (One0) (7,3);

\draw[name intersections={of=XY and ZeroaA, by={A}}]
		(A) circle [radius=3pt] node[anchor=south west]{$(a)$};

\draw[dotted, thick]	(Zeroa) parabola bend (One0) (A);

\draw[dotted, thick]	(Zerob) parabola bend (B0) (J);

\draw[dashed, very thick]	(Zerok) parabola bend (B0) (A);

\end{tikzpicture}

Figure 5: Action of the multiplicative loop on the vertical line.
\end{center}

\subsection{Fano configurations in projective planes}

As an aside before moving to the main motivation for this article, we first
provide a theorem concerning a well known conjecture in projective geometry.
It is possible, perhaps even probable given the simplicity of our argument,
that the main theorem of this section is known, but we have not been able to
locate it.

The Fano configuration must be one of the most oft drawn graphs in all of
mathematics. Here it is (again!):
\begin{center}
\begin{tikzpicture}
\filldraw	(0,0) circle [radius=3pt]
		(2,0) circle [radius=3pt]
		(4,0) circle [radius=3pt]
		(2,1.155) circle [radius=3pt]
		(2,3.464) circle [radius=3pt]
		(3,1.732) circle [radius=3pt]
		(1,1.732) circle [radius=3pt];
\draw		(2,1.155) circle [radius=1.154];
\draw		(0,0) -- (3,1.732);
\draw		(4,0) -- (1,1.732);
\draw		(2,0) -- (2,3.464);
\draw		(0,0) -- (2,3.464);
\draw		(4,0) -- (2,3.464);
\draw		(0,0) -- (4,0);
\end{tikzpicture}
\end{center}
Using the action of the additive loop on the vertical line described above,
here we establish a necessary and sufficient condition for any
projective plane to contain a Fano configuration.
\begin{thm} \label{fanoconfig}
Let $\pplane$ be a projective plane of finite order.
Then $\pplane$ contains a Fano configuration if and only if it can be
coordinatised in such a way that the resulting additive loop contains an
involution.
\end{thm}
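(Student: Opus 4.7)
The plan is to prove both directions by directly matching the seven-point, seven-line Fano configuration against the picture in Figure~4 for the action of $\oplus$ on the vertical line. One direction extracts a Fano configuration from an involution; the other extracts the coordinatising quadrangle from a given Fano configuration in such a way that $1 \oplus 1 = 0$.

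For sufficiency, I would fix a coordinatisation in which $(\R,\oplus)$ has an involution $a$ (so $a \neq 0$ and $a \oplus a = 0$). The description in Figure~4, specialised to $b=a$, tells us that $a \oplus a = 0$ is equivalent to the three points $\pto,\ptj,(a,a)$ being collinear. The candidate Fano configuration is then the seven points $\pto,\ptx,\pty,\ptj,(0,a),(a,0),(a,a)$ together with the seven lines $[0],[0,0],[\infty],\overline{(0,a)\ptj},\overline{(0,a)\ptx},\overline{(a,0)\pty},\overline{\pto\ptj}$. A routine check, using only that $a \neq 0$ and the defining intersections from the coordinatisation, shows that these seven points are pairwise distinct, each listed line carries exactly three of them, and each point lies on exactly three of the listed lines, which is the Fano incidence structure.

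For necessity, suppose $\pplane$ contains a Fano configuration $F$. A short count (the seven lines of $F$ cover exactly $7\cdot 4 = 28$ of the $35$ four-subsets of its point set, and no four-subset contains two collinear triples since any two lines share only one point) shows that $F$ contains seven quadrangles; pick any one, $Q_1,Q_2,Q_3,Q_4$. The Fano axiom holding inside $F$ forces the three diagonal points $D_{ij}:=\overline{Q_iQ_j}\cap\overline{Q_kQ_l}$, for $\{i,j,k,l\}=\{1,2,3,4\}$, to be collinear. The plan is then to coordinatise $\pplane$ by declaring $\ptx=Q_1,\ \pty=Q_2,\ \pto=D_{14}$, and $\pti=D_{13}$. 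Chasing the auxiliary definitions shows that with these choices one automatically obtains $\ptj=D_{12}$, $(0,1)=Q_3$, $(1,0)=Q_4$, and $(1,1)=D_{13}$, each identification being the intersection of two lines that share a known diagonal point. Collinearity of $\pto=D_{14},\ \ptj=D_{12},\ (1,1)=D_{13}$ then reads, via Figure~4 with $a=b=1$, as $1\oplus 1 = 0$, exhibiting $1$ as an involution.

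The one step that needs care is verifying that the choices in the necessity direction actually form a valid coordinatising quadrangle: $\pto,\ptx,\pty$ must be non-collinear, and $\pti=D_{13}$ must lie off each of the three sides $[0],[0,0],[\infty]$. This reduces to the standard observation that inside a complete quadrangle the four vertices and three diagonal points are seven distinct points whose collinearities do not extend beyond those visible in the configuration; since those incidences are fully prescribed inside a Fano configuration, the required non-incidences can be read off directly. I expect this to be the only real obstacle, and it is short.
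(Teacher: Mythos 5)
Your proposal is correct and takes essentially the same route as the paper: both directions rest on the Figure~4 observation that $a\oplus a=0$ is precisely the collinearity of $\pto$, $(a,a)$ and $\ptj$, with the same seven points and seven lines exhibited in the sufficiency direction. Your necessity argument is the paper's $t=1$ case, merely repackaged (the paper selects a triangle of the configuration plus the collinear triple among the remaining four points rather than a quadrangle and its diagonal points, and additionally allows the involution to be placed at an arbitrary prescribed $t\ne 1$ by choosing $\pti$ outside the configuration).
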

\begin{proof}
Suppose first that the plane $\pplane$ has been coordinatised in such a way
that the resulting additive loop, $(\R,\oplus)$, contains an involution.
Call it $t$.
Then $t\oplus t=0$, so that $(0,t\oplus t)=(0,0)=\pto$.
In particular, $\pto, (t,t)$ and $\ptj$ are collinear.
If we now return to the diagram describing how addition acts on
$\overline{\pto\pty}$ and redraw, we find we have the following scenario:
\begin{center}
\begin{tikzpicture}[domain=-2:12]
\coordinate 	(O) at (0,0);
\coordinate 	(X) at (10,0);
\coordinate 	(Y) at (0,5);
\coordinate	(t0) at (5,0);
\coordinate	(Zerot) at (0,2.8);

\draw[fill]	(O) circle [radius=3pt] node[anchor=north east]{$\pto$};
\draw		(X) circle [radius=3pt] node[anchor=north]{$\ptx$};
\draw		(Y) circle [radius=3pt] node[anchor=east]{$\pty$};
\draw		(t0) circle [radius=3pt] node[anchor=north]{$(t,0)$};
\draw		(Zerot) circle [radius=3pt] node[anchor=east]{$(0,t)$};

\draw[thick]	(-1,0) -- (11,0);
\draw[thick]	(0,-1) -- (0,6);

\draw[thick, name path=XY]	(Y) .. controls (14/3,14/3) and (8,3) .. (X);

\path[name path=Zero1J]		(Zero1) parabola bend (One0) (9.5,2);

\draw[fill, name intersections={of=XY and Zero1J, by={J}}]
		(J) circle [radius=3pt] node[anchor=south west]{$\ptj=(1)$};

\draw[dotted, thick, name path=ZerotX]
	(Zerot) parabola bend (Zerot) (X);

\draw[dotted, thick, name path=t0Y]
	(Y) .. controls (13/6,23/6) and (23/6,13/6) .. (t0);
		
\draw[fill, name intersections={of=t0Y and ZerotX, by={TT}}]
	(TT) circle [radius=3pt] node[anchor=south west]{$(t,t)$};
		
\draw[dotted, thick]	(Zerot) parabola bend (t0) (J);

\draw[dashed, very thick]	(O) parabola bend (TT) (J);

\end{tikzpicture}

Figure 6: Collinearity of $\pto$, $(t,t)$ and $\ptj$.
\end{center}
This is easily seen to be a Fano configuration.

Conversely, suppose a projective plane $\pplane$ contains a Fano configuration.
Let $\R$ be our coordinatising set and $t\in\Rs$ be fixed.
Choose any triangle $\pto,\ptx,\pty$ of the Fano configuration.
Of the remaining four points in the Fano configuration, three must lie on a
line: label them $(0,t)$, $(t,0)$ and $\ptj=(1)$, with $(0,t)$ on
$\overline{\pto\pty}$, $(t,0)$ on $\overline{\pto\ptx}$ and $\ptj$ on
$\overline{\ptx\pty}$. Finally, label the remaining point $(t,t)$.
If we chose $t=1$, then we have already selected $\pti=(1,1)$ and
coordinatising $\pplane$ using the quadrangle $\pto\ptx\pty\pti$ will result
with $1\oplus 1=0$.
Otherwise, choose an arbitrary point $\pti=(1,1)$ not in the Fano configuration
and set $\overline{\pti\ptj}\cap \overline{\pto\ptx}=(1,0)$ and
$\overline{\pti\ptj}\cap \overline{\pto\pty} = (0,1)$.
Now proceeding to coordinatise $\pplane$ using the quadrangle
$\pto\ptx\pty\pti$, we find $t\oplus t=0$.
In either case we have an involution in $(\R,\oplus)$.
\end{proof}
An immediate corollary of the theorem is the statement concerning
Fano configurations in Desarguesian planes mentioned in the introduction: in a
Desarguesian
plane, any coordinatisation must produce an additive loop that is, in fact,
a group of order equal to the order of the plane. Consequently, the plane must
have even order to allow an involution and no Desarguesian plane of odd order
can contain a Fano configuration.

While the statement gives a clear necessary and sufficient condition, it may
still be viewed as unsatisfying, in that there is no known general criteria
which determine that a loop must contain an involution.

We note that one direction of the above theorem is immediate from the following
general statement.
\begin{thm}
Let $\pplane$ be a projective plane of order $n$ and $\splane$ be a subplane of
$\pplane$ of order $m\le n$.
Let $\R$ be a coordinatising set for $\pplane$ of cardinality $n$.
If the coordinatising quadrangle $\pto\ptx\pty\pti$ is contained in $\splane$,
then there exists a subset $\Sset$ of $\R$ of cardinality $m$ which acts as the
coordinatising set of $\splane$.
Moreover, the PTR $T$ produced by coordinatising $\pplane$ acts as the PTR of
$\splane$ when restricted to $\Sset$.
\end{thm}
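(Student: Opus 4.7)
The plan is to execute the coordinatisation procedure of Section 2 on $\pplane$ using the quadrangle $\pto\ptx\pty\pti$ that is guaranteed to lie in $\splane$, and to track, step by step, which labels end up attached to $\splane$-points and $\splane$-lines. The set $\Sset$ will be extracted at the single free labelling step of the procedure, namely the naming of the remaining points on $[0]$.

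First I would observe that the lines $[0]=\overline{\pto\pty}$, $[0,0]=\overline{\pto\ptx}$ and $[\infty]=\overline{\ptx\pty}$ are $\splane$-lines, since subplanes are closed under joining two of their own points. Consequently the intersection points $(0,1)$, $(1,0)$ and $\ptj=(1)$, each obtained as the meet of two $\splane$-lines, lie in $\splane$. Since $\splane$ has order $m$, the line $[0]$ contains exactly $m+1$ $\splane$-points; three of these are already named $\pto,\pty,(0,1)$, leaving $m-2$ unlabelled $\splane$-points on $[0]$. I would assign to these $m-2$ points any $m-2$ distinct elements of $\R\setminus\{0,1\}$, and use the remaining $n-m$ elements of $\R\setminus\{0,1\}$ for the non-$\splane$ points on $[0]$. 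Define $\Sset$ to be $\{0,1\}$ together with the $m-2$ labels chosen for the $\splane$-points of $[0]$.

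Next I would walk through the remaining labelling rules and verify, by a short induction on the construction order, that for $a,b\in\Sset$ the points $(a,0)$, $(a)$ and $(a,b)$ lie in $\splane$, while for $a\in\R\setminus\Sset$ they do not. Each assertion reduces to a single application of subplane closure: for example $(a,0)=\overline{(0,a)\ptj}\cap [0,0]$ is the intersection of two $\splane$-lines whenever $(0,a)\in\splane$, and conversely if $(a,0)\in\splane$ then $\overline{(a,0)\ptj}$ is an $\splane$-line meeting $[0]$ in an $\splane$-point, which must be $(0,a)$. The analogous arguments handle $(a)$ and $(a,b)$, as well as the lines $[a]=\overline{(a,0)\pty}$ and $[m,k]=\overline{(m)(0,k)}$. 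The upshot is a bijection between $\splane$-points of affine and slope type and pairs/singletons drawn from $\Sset$, and the corresponding identification for lines.

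Finally I would show that $T$ restricted to $\Sset^3$ is the PTR of $\splane$. If $m,x,y\in\Sset$ and $T(m,x,y)=k$, then $(x,y)\in[m,k]$ forces $[m,k]=\overline{(m)(x,y)}$ to be an $\splane$-line, so $(0,k)\in\splane$ and hence $k\in\Sset$; thus $T(\Sset^3)\subseteq\Sset$. Independently coordinatising $\splane$ with the same quadrangle $\pto\ptx\pty\pti$ and the same labels on the $\splane$-points of $[0]$ triggers exactly the same sequence of intersection-based definitions as in $\pplane$, so the two resulting PTRs agree pointwise on $\Sset^3$. The main obstacle I anticipate is purely one of bookkeeping: ensuring that each labelling step is revisited in order and that the subplane-closure axiom is cited in both directions (points in $\splane$ give labels in $\Sset$, and labels in $\Sset$ come from points in $\splane$); once this is done carefully, the statement about $T$ drops out with no further work.
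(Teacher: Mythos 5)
Your proposal is correct and follows essentially the same route as the paper, which disposes of the theorem in one sentence: since the only freedom after fixing the quadrangle is the labelling of $[0]$, one may label the $\splane$-points of $[0]$ first (or last), and the sequential, intersection-driven assignment of all remaining coordinates then automatically respects the subplane. Your write-up simply makes explicit the closure arguments and the induction on construction order that the paper leaves implicit.
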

The proof follows immediately from the observation that after choosing the
quadrangle $\pto\ptx\pty\pti$ from $\splane$, the sequential way in which
coordinates are assigned guarantees you could simply coordinatise $\splane$
first during the coordinatisation of $\pplane$ (or, indeed, you could just as
easily label the points of $\splane$ last).
Since the coordinatisation of a Desarguesian plane must always produce a 
field under the loop operations arising from the coordinatisation, we get
the following corollary for free.
\begin{cor}
If $\pplane$ is a projective plane of order $n$ containing a Desarguesian
subplane of order $q$, then $\pplane$ can always be coordinatised so that
there is a subset $\Sset$ of the coordinatising set which forms a field of order
$q$ under the operations $\oplus$ and $\odot$ arising from the coordinatisation
of $\pplane$.
\end{cor}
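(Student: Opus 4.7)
The plan is to apply the preceding theorem directly, exploiting the freedom we have in choosing the initial quadrangle. Because $\pplane$ contains a Desarguesian subplane $\splane$ of order $q$, and $\splane$ is itself a projective plane, we may select four points $\pto,\ptx,\pty,\pti$ of $\splane$ with no three collinear. Coordinatise $\pplane$ using this quadrangle together with any coordinatising set $\R$ of cardinality $n$. By the preceding theorem, the restricted labelling produced by this process singles out a subset $\Sset\subset\R$ of cardinality $q$ which serves as a coordinatising set for $\splane$, and moreover the PTR $T$ of $\pplane$ restricted to $\Sset^3$ coincides with the PTR $T_{\splane}$ of $\splane$.

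The remaining step is to identify the ambient loop operations with those of $\splane$ on $\Sset$. Recall that $\oplus$ and $\odot$ on $\R$ are defined by $x\oplus y=T(1,x,y)$ and $x\odot y=T(x,y,0)$. Since $\pto,\pti\in\splane$, the labels $0$ and $1$ lie in $\Sset$, so for $x,y\in\Sset$ the triples $(1,x,y)$ and $(x,y,0)$ lie in $\Sset^3$. The agreement of $T$ with $T_{\splane}$ on $\Sset^3$ then shows that the restrictions of $\oplus$ and $\odot$ to $\Sset$ (respectively $\Sset^\star$) are precisely the additive and multiplicative loop operations arising from coordinatising $\splane$ alone. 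Since $\splane$ is Desarguesian, any such coordinatisation yields a field of order $q$, and the conclusion follows.

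There is no real obstacle: the entire argument is a bookkeeping exercise built on the preceding theorem plus the standard fact that the loops of a coordinatisation of a Desarguesian plane assemble into a field of the appropriate order. The only point worth being explicit about is that the distinguished elements $0,1$ do in fact land inside $\Sset$, which is ensured by choosing the initial quadrangle inside $\splane$; this is what allows the restriction of $\oplus$ and $\odot$ to $\Sset$ to make sense and to coincide with the intrinsic field operations of $\splane$.
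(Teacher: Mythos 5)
Your proposal is correct and follows exactly the route the paper intends: it applies the preceding theorem by placing the quadrangle $\pto\ptx\pty\pti$ inside the Desarguesian subplane, then invokes the standard fact that coordinatising a Desarguesian plane yields a field. The paper treats this as immediate (``we get the following corollary for free''), and your additional bookkeeping --- checking that $0,1\in\Sset$ so the restrictions of $\oplus$ and $\odot$ make sense --- is just a more explicit rendering of the same argument.
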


\section{Coordinatising using finite fields} \label{fqcoord}

Throughout the remainder of the paper we fix
$q=p^e$ for some prime $p$ and natural number $e$.
We use $\ff{q}$ to denote the finite field of $q$ elements and $\ffs{q}$ its
non-zero elements.
Every function on $\ff{q}$ can be represented uniquely
by a polynomial in $\ffx{q}$ of
degree less than $q$; this follows at once from Lagrange Interpolation, and
indeed this observation is easily extended to the multivariate case.
Any polynomial whose degree in each variable
is less than $q$ is called {\em reduced}.
A polynomial $f\in\ffxi{q}{0}$ is called a {\em permutation polynomial (PP)}
over $\ff{q}$ if the evaluation map ${\bf x}\mapsto f({\bf x})$ is
equidistributive on $\ff{q}$ -- that is, for each $y\in\ff{q}$, the equation
$f({\bf x})=y$ has $q^{n-1}$ solutions ${\bf x}\in\ff{q}^n$.
(In the case where $n=1$, the evaluation map is a bijection.)
It follows from Hermite's criteria that if a reduced polynomial
$f\in\ffxi{q}{0}$ is a PP over $\ff{q}$, then the degree of $f$ in each $X_i$
is at most $q-2$.
Even a casual perusal of Mathematical Reviews will show PPs have been a
significant research topic in their own right for many years (effectively since
historical times), with a wide array of applications.

A related concept also of interest is that of a $\kappa$-polynomial.
A polynomial $f\in\ffxi{q}{0}$ is a {\em $\kappa$-polynomial over $\ff{q}$} if
\begin{equation*}
k_a = \#\{ \bf x\in\ff{q}^n\,:\, f(\bf x)=a\}
\end{equation*}
is independent of $a$ for $a\in\ffs{q}$.
In direct contrast to the study of permutation polynomials, there are almost no
results in the literature directly discussing $\kappa$-polynomials. This
seems altogether surprising since the specified regularity on preimages of all
non-zero elements of the field suggests such polynomials must almost certainly
appear in many guises.
As an example in how they may arise, recall that a {\em skew Hadamard
difference set (SHDS)} $D\subset \ffs{q}$ is a set of order $(q-1)/2$ where
every element of $\ffs{q}$ can be written as a difference of elements of $D$ in
precisely $(q-3)/4$ ways.
Let $D$ be any SHDS, and define a two-to-one map $\phi:\ffs{q}\rightarrow D$ in
an arbitrary way.
Extending $\phi$ to all of $\ff{q}$ by setting $\phi(0)=0$, we can associate
with $\phi$ a reduced polynomial $f\in\ffx{q}$.
It is straightforward to confirm the polynomial $M(X,Y)=f(X)-f(Y)$ is a
$\kappa$-polynomial over $\ff{q}$ with $k_a=q-1$ for all $a\in\ffs{q}$.
(One could generalise this construction in a suitable way to obtain
$\kappa$-polynomials in more than two variables using difference families.)
The thesis of Matthews, \cite{matthewsphd}, contains some general results on 
$\kappa$-polynomials.
Some of these results are given in the author's Section 9.4 of the Handbook of
Finite Fields \cite{bmullen13}.
Theorem 9.4.8 of \cite{bmullen13}, which is straightforward to prove, shows
how $\kappa$-polynomials play a role in the study of projective planes; the
theorem is extended in Theorem \ref{PACE} below.

One can choose any set $\R$ of cardinality $n$ for the labelling of points in
the coordinatisation process, but since the coordinatisation method will
produce an algebraic structure on the set chosen, there are obviously good and
bad choices.
The resulting function will often exhibit additional algebraic structure,
inherited from the plane, so algebraic sets are obvious candidates.
For example, regardless of the plane, the points $\pto$ and $\pti$ determine
two special elements, zero and one, respectively, of the coordinatisation which
have properties much the same to $0$ and $1$ in any ring with unity.
Since the labelling during the coordinatisation process is arbitrary, by
choosing a ring of order $n$ with unity, we may label the zero and one of the
coordinatisation as the 0 and 1 of the ring.

We now move to make the previous paragraph much more formal in the case where
the plane has prime power order $q$.
Let $\pplane$ be a projective plane of order $q$.
Via coordinatisation, we can obtain a PTR equivalent to the plane $\pplane$.
Since the plane has order $q$, we can view the PTR as some function in three
variables defined over $\ff{q}$, and consequently view the function as a
(reduced) polynomial $T\in\ffxyz{q}$.
Furthermore, since the correspondence of elements in the coordinatisation
and the elements of $\ff{q}$ is arbitrary, we may set the zero and one of the
coordinatisation of $\pplane$ to be the elements $0$ and $1$ of $\ff{q}$.
\begin{defn}
A {\em PTR polynomial $T(X,Y,Z)$ over $\ff{q}$} is any three variabled
polynomial in $\ffxyz{q}$ resulting from the coordinatisation of a plane
$\pplane$ of order $q$ through labelling the points of $\pplane$ using elements
of $\ff{q}$ and where we label $\pto=(0,0)$ and $\pti=(1,1)$.
\end{defn}
Note that for a PTR polynomial, we are guaranteed that the zero and one of the
PTR and the 0 and 1 of $\ff{q}$ coincide.
An equivalent definition is that 
$T\in\ffxyz{q}$ is a PTR polynomial over $\ff{q}$ if it satisfies
Properties (a) through (e) of Lemma \ref{hplem} over $\ff{q}$.

\section{Restrictions on the form of PTR polynomials} \label{ptrpolys}

We now look to exploit the conditions on $T$ described in Lemma \ref{hplem} to
obtain restrictions on the possible forms of $T$.
Throughout we assume $T$ is a reduced polynomial.
\begin{thm} \label{PA}
Suppose $T\in\ffxyz{q}$ satisfies Property (a).
Then
\begin{equation} \label{aneqn}
T(X,Y,Z) = Z + XYZ\, M_1(X,Y,Z) + M_2(X,Y),
\end{equation}
where
\begin{align*}
M_1(X,Y,Z) &= \sum_{i=0}^{q-2} \sum_{j=0}^{q-2} \sum_{k=0}^{q-2}
b_{ijk} X^iY^jZ^k,\\
M_2(X,Y) &= \sum_{i=1}^{q-1} \sum_{j=1}^{q-1} c_{ij} X^i Y^j.
\end{align*}
In particular, 
\begin{equation} \label{odotop}
x\odot y = T(x,y,0) = M_2(x,y)
\end{equation}
for all $x,y\in\ff{q}$.
\end{thm}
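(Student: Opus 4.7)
The plan is to exploit the uniqueness of reduced polynomial representations over $\ff{q}$: if two reduced polynomials agree as functions on $\ff{q}^n$, they are identical as polynomials. Write $T$ in its fully expanded reduced form
\begin{equation*}
T(X,Y,Z) = \sum_{i=0}^{q-1}\sum_{j=0}^{q-1}\sum_{k=0}^{q-1} a_{ijk}\, X^i Y^j Z^k,
\end{equation*}
and pick off the coefficients $a_{ijk}$ by specialising according to Property (a). This is essentially a bookkeeping exercise, but the key observation that makes it rigorous is that the reductions $T(X,0,Z)$ and $T(0,Y,Z)$ remain reduced polynomials (in $X,Z$ and in $Y,Z$ respectively), so forcing them to agree with the reduced polynomial $Z$ forces equality coefficient-by-coefficient.

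First, I would apply Property (a) in the form $T(a,0,z)=z$ for all $a,z\in\ff{q}$. Setting $Y=0$ kills every monomial with $j\ge 1$, leaving $T(X,0,Z)=\sum_{i,k} a_{i0k} X^i Z^k$. Since this reduced bivariate polynomial agrees as a function with the reduced polynomial $Z$, we conclude $a_{i0k}=0$ except when $(i,k)=(0,1)$, with $a_{001}=1$. Next, applying $T(0,b,z)=z$ in the same way forces $a_{0jk}=0$ except $a_{001}=1$ (already accounted for). Hence every surviving monomial other than $Z$ itself has $i\ge 1$ \emph{and} $j\ge 1$.

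Having trimmed the polynomial, I would split the remaining nonzero contributions into those with $k=0$ and those with $k\ge 1$. The $k=0$ part is
\begin{equation*}
\sum_{i=1}^{q-1}\sum_{j=1}^{q-1} a_{ij0}\, X^i Y^j,
\end{equation*}
which is precisely the asserted form of $M_2(X,Y)$. The $k\ge 1$ part has $i,j,k$ all at least $1$, so we can pull out a common factor of $XYZ$ and reindex via $b_{ijk}=a_{(i+1)(j+1)(k+1)}$, obtaining $XYZ\cdot M_1(X,Y,Z)$ with $M_1$ as described (the reduction bound $a_{IJK}=0$ unless $I,J,K\le q-1$ translates to $i,j,k\le q-2$). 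Combining these two pieces with the term $a_{001}Z = Z$ gives exactly the decomposition in \eqref{aneqn}.

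Finally, for \eqref{odotop}, substituting $Z=0$ annihilates both the $Z$ summand and the $XYZ\cdot M_1$ summand, leaving $T(x,y,0)=M_2(x,y)$; by the definition of $\odot$ this equals $x\odot y$. The only subtlety worth flagging is the appeal to uniqueness of the reduced form, so I would make that appeal explicit (as noted in the paper, it is an immediate consequence of Lagrange interpolation). No real obstacle arises beyond this.
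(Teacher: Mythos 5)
Your proposal is correct and follows essentially the same route as the paper: expand $T$ in reduced form, use the specialisations $T(X,0,Z)=Z$ and $T(0,Y,Z)=Z$ as polynomial identities (via uniqueness of the reduced representation) to kill all monomials with $j=0$ or $i=0$ other than $Z$ itself, and then split the surviving terms by whether $k=0$ to obtain $M_2$ and $XYZ\,M_1$. Your explicit flagging of the uniqueness-of-reduced-form step and the reindexing $b_{ijk}=a_{(i+1)(j+1)(k+1)}$ are just slightly more detailed versions of what the paper does.
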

\begin{proof}
As a polynomial, we may represent $T$ as
\begin{equation*}
T(X,Y,Z) = \sum_{i,j,k=0}^{q-1} a_{ijk} X^i Y^j Z^k.
\end{equation*}
By Property (a), $T(0,0,z)=z$ for all $z$.
Viewing this as a polynomial identity in $Z$ we immediately find
\begin{equation*}
a_{00k} =
\begin{cases}
1 &\text{ if $k=1$,}\\
0 &\text{ if $k\ne 1$.}
\end{cases}
\end{equation*}
Noting $T(x,0,Z)=Z$ for all $x$, we again view this as a polynomial identity in
$X,Z$, and obtain
\begin{equation*}
Z = T(X,0,Z) = \sum_{i=0}^{q-1} X^i \left(\sum_{k=0}^{q-1} a_{i0k} Z^k\right).
\end{equation*}
For $i\ne 0$, this now forces
\begin{equation*}
\sum_{k=0}^{q-1} a_{i0k} Z^k = 0.
\end{equation*}
As a polynomial identity, we get $a_{i0k}=0$ for all $i\ne 0$.
A similar argument shows $a_{0jk}=0$ for all $j\ne 0$.
Hence
\begin{equation} \label{Teqn}
T(X,Y,Z) = Z + \sum_{i,j=1}^{q-1} \sum_{k=0}^{q-1} a_{ijk} X^iY^jZ^k
= Z + XY \, T_1(X,Y,Z),
\end{equation}
for some reduced $T_1\in\ffxyz{q}$.
It is clear we can now rewrite $T_1$ as claimed in (\ref{aneqn}).
\end{proof}
So we see that Property (a) alone isolates the behaviour of $\odot$, though
of course it does not {\em define} the behaviour of $\odot$.

We now derive a result on PPs; though this could just as easily be established
by considering the plane directly, we choose instead to use as few of the
properties of Lemma \ref{hplem} as is necessary in each case.
\begin{thm} \label{PPthm}
Let $T\in\ffxyz{q}$.
The following statements hold.
\begin{enumerate}
\renewcommand{\labelenumi}{(\roman{enumi})}
\item Suppose $T$ satisfies Properties (a) and (c).
Then $T(X,y,z)$ is a PP in $X$ for every choice of
$(y,z)\in\ffs{q}\times\ff{q}$.

\item Suppose $T$ satisfies Properties (a) and (e).
Then $T(x,Y,z)$ is a PP in $Y$ for every choice of
$(x,z)\in\ffs{q}\times\ff{q}$.

\item Suppose $T$ satisfies Property (d).
Then $T(x,y,Z)$ is a PP in $Z$ for every choice of
$(x,y)\in\ff{q}\times\ff{q}$.
\end{enumerate}
\end{thm}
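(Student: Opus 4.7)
The plan is to translate each permutation claim into an existence-and-uniqueness statement and match it directly against one of Properties (a), (c), (d), (e) of Lemma \ref{hplem}. In two of the three parts the trick is to use Property (a) to manufacture an auxiliary equation so that (c) or (e) can be applied.

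Part (iii) is essentially a restatement. Fixing $x,y\in\ff{q}$, Property (d) says that for every $c\in\ff{q}$ there is a unique $z$ with $T(x,y,z)=c$; this is exactly the statement that $T(x,y,Z)$ is a PP in $Z$, so nothing further is needed.

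For part (i), I would fix $y\in\ffs{q}$, $z\in\ff{q}$, and an arbitrary target value $v\in\ff{q}$. Property (a) gives $T(x,0,v)=v$ for every $x$, so the equation $T(x,y,z)=v$ is equivalent to $T(x,y,z)=T(x,0,v)$. Now Property (c) applied with $(a,b,c,d)=(y,z,0,v)$ requires $a\ne c$, which holds precisely because $y\in\ffs{q}$, and yields a unique $x$ solving this equation. Hence $T(X,y,z)$ is a PP in $X$. Part (ii) proceeds in the same spirit but invokes (e) in place of (c). Fix $x\in\ffs{q}$, $z\in\ff{q}$, and a target $v\in\ff{q}$. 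Apply Property (e) with $(a,b,c,d)=(x,v,0,z)$; since $x\ne 0$, there is a unique pair $(y',z')$ satisfying $T(x,y',z')=v$ and $T(0,y',z')=z$. The second equation collapses to $z'=z$ by Property (a), leaving a unique $y'\in\ff{q}$ with $T(x,y',z)=v$, as required.

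There is no serious obstacle in this proof; everything reduces to choosing the right tuple in the right axiom. The only step that requires any thought is recognising that Property (a) is the correct lever to eliminate the nuisance argument (the $Z$-slot in (c), the $z$-component of the pair in (e)) before the chosen axiom is invoked.
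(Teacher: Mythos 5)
Your proposal is correct and follows essentially the same route as the paper's proof: part (iii) is read off directly from Property (d), and parts (i) and (ii) use Property (a) to fix the nuisance slot ($T(x,0,v)=v$ in (c), and $z'=z$ from the second equation in (e)) before invoking the uniqueness in Properties (c) and (e) respectively. The only cosmetic difference is that you swap the roles of $(a,b)$ and $(c,d)$ in the application of Property (e), which is immaterial since the hypothesis is symmetric in requiring $a\ne c$.
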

\begin{proof}
For the 1st claim, an appeal to Property (c) with $c=0\ne a$, $b,d$ arbitrary
shows the equation $T(x,a,b)=T(x,0,d)$ has a unique solution $x$.
By Property (a), $T(x,0,d)=d$, and so $T(x,a,b)=d$ has a unique solution $x$
for each $d\in\ff{q}$.

For (ii), fix $a=0$.
By Property (e), for any $b,c,d$ with $c\ne 0$ there exists a unique $(y,z)$
such that $T(0,y,z)=b$ and $T(c,y,z)=d$.
By Property (a), $T(0,y,z)=z$, and so $z$ is fixed: $z=b$.
Thus, as we range over all $d\in\ff{p}$, we have a unique
preimage $y$, proving the claim.

For (iii), fix $x,y$.
Property (d) tells us that for any $c$, we can always solve uniquely for $z$ in
$T(x,y,z)=c$.
Thus $T(x,y,z_1)=T(x,y,z_2)$ implies $z_1=z_2$, so that $T(x,y,Z)$ is a PP in
$Z$ for every $x,y$.
\end{proof}
\begin{cor} \label{PACDE}
Suppose $T\in\ffxyz{q}$ satisfies Properties (a), (c), (d) and (e).
Then $T$ has degree at most $q-2$ in each of $X$, $Y$, and $Z$.
\end{cor}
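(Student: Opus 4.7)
The plan is to combine Hermite's criterion (a reduced univariate permutation polynomial over $\ff{q}$ has degree at most $q-2$) with the three parts of Theorem \ref{PPthm}. For each of the variables $X$, $Y$, $Z$ in turn, I would view $T$ as a polynomial in that variable with coefficients in $\ff{q}[\text{other two}]$, extract the coefficient of the degree-$(q-1)$ term, and argue that it must vanish identically as a reduced polynomial.

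The cleanest case is $Z$. Write $T(X,Y,Z) = \sum_{k=0}^{q-1} C_k(X,Y)\, Z^k$ with each $C_k$ reduced. By Theorem \ref{PPthm}(iii), $T(x,y,Z)$ is a PP in $Z$ for \emph{every} $(x,y)\in\ff{q}^2$; Hermite's criterion then forces $C_{q-1}(x,y)=0$ for every such pair. Since $C_{q-1}$ is reduced and vanishes identically on $\ff{q}^2$, it is the zero polynomial, so $\deg_Z T \le q-2$.

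For the $X$-variable, write $T(X,Y,Z) = \sum_{i=0}^{q-1} A_i(Y,Z)\, X^i$ with each $A_i$ reduced. Theorem \ref{PPthm}(i) yields that $T(X,y,z)$ is a PP in $X$ only for $(y,z)\in\ffs{q}\times\ff{q}$, so Hermite gives $A_{q-1}(y,z)=0$ on that smaller set. To close the gap along $Y=0$, I would invoke Property (a): the polynomial identity $T(X,0,Z)=Z$ implies $A_i(0,Z)=0$ for every $i\ge 1$, and in particular $A_{q-1}(0,Z)=0$. Hence $A_{q-1}$ vanishes on all of $\ff{q}^2$, and being reduced it is identically zero, so $\deg_X T \le q-2$. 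The $Y$-variable case is entirely symmetric, using Theorem \ref{PPthm}(ii) together with the identity $T(0,Y,Z)=Z$ from Property (a).

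There is no serious obstacle here; the one subtle point is precisely the patching step just described. Theorem \ref{PPthm}(i) and (ii) exclude the line $y=0$ (respectively $x=0$), and without Property (a) one could not rule out a nonzero $X^{q-1}$-coefficient supported on that line. It is exactly Property (a), which already told us in Theorem \ref{PA} that $XY$ divides $T-Z$, that supplies the missing values and lets us conclude that the top reduced coefficient is the zero polynomial rather than merely vanishing on a hyperplane complement.
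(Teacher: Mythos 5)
Your proof is correct and follows essentially the same route as the paper: apply Theorem \ref{PPthm} together with Hermite's criterion to force the degree-$(q-1)$ coefficient in each variable to vanish on enough points, then use reducedness to conclude it is the zero polynomial. The only difference is cosmetic --- the paper works from the explicit form (\ref{aneqn}) (in which the top $X$- and $Y$-coefficients are already divisible by $Y$ and $X$ respectively) and dismisses those two cases with ``similar arguments,'' whereas you make the patching along the lines $y=0$ and $x=0$ via Property (a) explicit; that is exactly the right point to be careful about.
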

\begin{proof}
By assumption, $T$ has the form given in (\ref{aneqn}).
Since $T(x,y,Z)$ is a PP for all $x,y\in\ff{q}$, Hermite's criteria tells us
\begin{equation*}
\sum_{i,j=1}^{q-1} a_{ij\, (q-1)} x^i y^j =0
\end{equation*}
for all $x,y$.
This holds as a polynomial identity in $X,Y$, and so $b_{ij\, (q-1)}=0$
for all $i,j$.
Similar arguments can be obtain the bounds on the degrees of $X$ and $Y$.
\end{proof}
While it may be tempting to surmise from the above that $T(X,Y,z)$ is a PP for
all $z\in\ff{q}$, it is not actually true, as the next result shows.
\begin{thm} \label{PACE}
Suppose $T\in\ffxyz{q}$ satisfies Property (a) and one of Properties (c) or
(e).
Then $T(X,Y,z)-z$ is a $\kappa$-polynomial for any $z\in\ff{q}$.
\end{thm}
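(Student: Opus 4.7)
The plan is to fix $z_0 \in \ff{q}$ and count, for each $w \in \ff{q}$, the size $N(w)=\#\{(x,y)\in\ff{q}^2 : T(x,y,z_0)=w\}$. Showing that $T(X,Y,z_0)-z_0$ is a $\kappa$-polynomial amounts to showing $N(w)$ is constant as $w$ ranges over $\ff{q}\setminus\{z_0\}$ (equivalently, over all $a=w-z_0\in\ffs{q}$).

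First I would extract from Property (a) the identities $T(x,0,z_0)=z_0$ and $T(0,y,z_0)=z_0$ for every $x,y\in\ff{q}$. This immediately dumps all $2q-1$ ``axis'' pairs $\{(x,0)\}\cup\{(0,y)\}$ into the fibre over $z_0$, and conversely forces any preimage $(x,y)$ of a value $w\ne z_0$ to satisfy both $x\ne 0$ and $y\ne 0$. So for $w\ne z_0$ the counting reduces to pairs in $\ffs{q}\times\ffs{q}$.

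Next I would invoke Theorem \ref{PPthm}, which has already done the required PP work. Assuming Property (c), part (i) of that theorem says $T(X,y,z_0)$ is a permutation polynomial in $X$ for every $y\in\ffs{q}$; assuming Property (e) instead, part (ii) says $T(x,Y,z_0)$ is a permutation polynomial in $Y$ for every $x\in\ffs{q}$. Take the first case: for each of the $q-1$ choices of $y\in\ffs{q}$ there is a unique $x\in\ff{q}$ with $T(x,y,z_0)=w$, and by Step~1 this $x$ is automatically nonzero whenever $w\ne z_0$. Thus $N(w)=q-1$ for every $w\ne z_0$. The case of Property (e) is entirely symmetric, yielding the same count.

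In short, for every $z_0$ the polynomial $T(X,Y,z_0)-z_0$ has exactly $q-1$ preimages of each nonzero element of $\ff{q}$, which is the defining condition of a $\kappa$-polynomial. There is really no obstacle here beyond correctly invoking Theorem \ref{PPthm}; the only thing to be careful about is that the axis fibre exhausted by Property (a) is precisely the fibre over $z_0$, so the constant value of $k_a$ (namely $q-1$) is obtained uniformly over $\ffs{q}$ rather than merely over a proper subset.
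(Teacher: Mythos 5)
Your proposal is correct and follows essentially the same route as the paper: Property (a) places the $2q-1$ axis pairs in the fibre over $z$, and Theorem \ref{PPthm}(i) or (ii) gives exactly $q-1$ preimages in $\ffs{q}\times\ffs{q}$ for each value $d\ne z$. The only cosmetic difference is that you rule out axis preimages of $d\ne z$ directly from Property (a), whereas the paper closes the count by noting $(2q-1)+(q-1)^2=q^2$; both settle the same point.
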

\begin{proof}
Fix $z$ and consider the polynomial $f_z\in\ffxy{q}$ given by
$f_z(X,Y)=T(X,Y,z)$.
If $d=z$, then by Property (a), $T(0,y,z)=T(x,0,z)=d$ for all $x,y\in\ff{q}$.
Thus $f_z(x,y)=d$ has (at least) $2q-1$ solutions.
If $d\ne z$, then by Theorem \ref{PPthm} (i) or (ii), there are precisely
$q-1$ solutions $(x,y)\in\ffs{q}\times\ffs{q}$ to the equation $f_z(x,y)=d$.
Since this accounts for all $q^2$ images, we see
\begin{equation*}
f_z(x,y) = d \quad
\begin{cases}
&\text{has $q-1$ solutions when $d\ne z$,}\\
&\text{has $2q-1$ solutions when $d = z$.}
\end{cases}
\end{equation*}
Consequently, the polynomial $f_z(X,Y)-z=T(X,Y,z)-z$ is a $\kappa$-polynomial
over $\ff{q}$.
\end{proof}
\begin{cor} \label{fullPP}
Suppose $T\in\ffxyz{q}$ satisfies either
Property (a) and one of Properties (c) or (e); or
Property (d).
Then $T(X,Y,Z)$ is a PP over $\ff{q}$
\end{cor}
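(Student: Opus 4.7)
My plan is to count preimages directly. Fix any $w\in\ff{q}$; to show $T$ is a PP, I need to verify that $\#\{(x,y,z)\in\ff{q}^3 : T(x,y,z)=w\}=q^2$ in each of the two hypothesis cases.

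The easier case is when $T$ satisfies Property (d). Here Theorem \ref{PPthm}(iii) directly says $T(x,y,Z)$ is a PP in $Z$ for every $(x,y)\in\ff{q}\times\ff{q}$. Consequently, for each of the $q^2$ pairs $(x,y)$, there is a unique $z$ with $T(x,y,z)=w$, and summing gives exactly $q^2$ triples.

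For the other case, where Property (a) holds together with one of (c) or (e), I will piggyback on the proof of Theorem \ref{PACE} rather than on its stated conclusion. That proof establishes the more refined fact: for each $z\in\ff{q}$, the polynomial $f_z(X,Y)=T(X,Y,z)$ achieves the value $d$ on exactly $q-1$ pairs $(x,y)\in\ffs{q}\times\ffs{q}$ when $d\neq z$, and on exactly $2q-1$ pairs when $d=z$ (coming from the $x=0$ and $y=0$ fibres, intersecting only at $(0,0)$). Fixing $d=w$ and summing these counts over all $z\in\ff{q}$, one $z$-value (namely $z=w$) contributes $2q-1$ preimages, and the remaining $q-1$ values of $z$ each contribute $q-1$ preimages. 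The total is
\begin{equation*}
(2q-1)+(q-1)(q-1) = 2q-1 + q^2 - 2q + 1 = q^2,
\end{equation*}
which is exactly the equidistribution count required for $T$ to be a PP on $\ff{q}^3$.

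There is no real obstacle: the only subtlety is to remember that the bare statement of Theorem \ref{PACE} (that $T(X,Y,z)-z$ is a $\kappa$-polynomial) does not by itself pin down the value of the exceptional count at $d=z$, so one should cite the more precise bookkeeping inside its proof. With that observation the argument is a one-line total-law-of-preimages computation.
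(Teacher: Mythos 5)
Your argument is correct and matches the paper's proof essentially verbatim: the Property (d) case via Theorem \ref{PPthm}(iii), and the other case by citing the refined preimage counts from inside the proof of Theorem \ref{PACE} and computing $(2q-1)+(q-1)(q-1)=q^2$. No differences worth noting.
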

\begin{proof}
Suppose first that $T$ satisfies Property (a) and one of Properties (c) or (e).
Fixing $z,d\in\ff{q}$, we see from the proof of Theorem \ref{PACE} that
\begin{equation*}
T(x,y,z) = d \quad
\begin{cases}
&\text{has $q-1$ solutions when $z\ne d$,}\\
&\text{has $2q-1$ solutions when $z = d$.}
\end{cases}
\end{equation*}
Consequently, as we range over all $z\in\ff{q}$, a given $d$ has
$(2q-1)+(q-1)(q-1)=q^2$ preimages $(x,y,z)\in\ff{q}^3$.

Now suppose Property (d) is satisfied.
Then by Theorem \ref{PPthm} (iii), $T(x,y,Z)$ is a PP for all choices of
$(x,y)\in\ff{q}\times\ff{q}$.
It follows at once that $T(x,y,z)=d$ has precisely $q^2$ solutions
$(x,y,z)$.
\end{proof}
At this point, we have shown that Properties (a), (c), and (d) can lead to
PPs.
Property (e) can also be used to derive a PP result, but not over $\ff{q}$.
Suppose $T\in\ffxyz{q}$.
Let $\{1,\beta\}$ be a basis for $\ff{q^2}$ over $\ff{q}$.
For any $a,b\in\ff{q}$, we define the function
$S_{a,b}:\ff{q^2}\rightarrow\ff{q^2}$ by
\begin{equation*}
S_{a,b}(x) = S_{a,b}(y+\beta z) = T(a,y,z) + \beta T(b,y,z).
\end{equation*}
When we talk of the polynomial $S_{a,b}$ we will mean the polynomial of least
degree in $\ffx{q^2}$ which when induced produces the function just defined.
The following lemma is now immediate.
\begin{lem} \label{PE}
Suppose $T\in\ffxyz{q}$ satisfies Property (e).
Then $S_{a,b}$ is a permutation polynomial over $\ff{q^2}$ whenever $a\ne b$.
\end{lem}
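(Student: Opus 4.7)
The plan is to show that the function $S_{a,b}$ is a bijection of $\ff{q^2}$; since $\ff{q^2}$ is finite and $S_{a,b}$ is well-defined as a function from $\ff{q^2}$ to itself via the basis $\{1,\beta\}$, this will immediately yield that the associated reduced polynomial in $\ffx{q^2}$ is a permutation polynomial.

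First I would observe that every element $w \in \ff{q^2}$ admits a unique representation $w = w_1 + \beta w_2$ with $w_1,w_2 \in \ff{q}$, since $\{1,\beta\}$ is by assumption a basis for $\ff{q^2}$ over $\ff{q}$. Writing a candidate preimage as $x = y + \beta z$ with $y,z \in \ff{q}$, the equation $S_{a,b}(x) = w$ becomes
\begin{equation*}
T(a,y,z) + \beta T(b,y,z) = w_1 + \beta w_2.
\end{equation*}
Since $T(a,y,z), T(b,y,z) \in \ff{q}$, the uniqueness of the basis representation forces this single equation over $\ff{q^2}$ to split into the simultaneous system $T(a,y,z) = w_1$ and $T(b,y,z) = w_2$ over $\ff{q}$.

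Next I would invoke Property (e) with the roles of the constants $a,c$ played by our $a,b$ (allowable because $a \neq b$ by hypothesis), and with $w_1, w_2$ playing the roles of $b,d$. This supplies exactly one pair $(y,z) \in \ff{q}^2$ satisfying both equations. Consequently, for every $w \in \ff{q^2}$ there exists a unique $x \in \ff{q^2}$ with $S_{a,b}(x) = w$, so $S_{a,b}$ is a bijection and hence a permutation polynomial.

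There is essentially no obstacle here: Property (e) was tailored to this situation, and the only substantive step is to note that the basis $\{1,\beta\}$ allows us to translate a single $\ff{q^2}$-valued equation into the two $\ff{q}$-valued equations that Property (e) directly resolves. The only minor care needed is to track that the pair $(y,z)$ returned by Property (e) does indeed lie in $\ff{q}^2$ (not $\ff{q^2}^2$), which is automatic since $T \in \ffxyz{q}$ and the constants $a,b,w_1,w_2$ all lie in $\ff{q}$.
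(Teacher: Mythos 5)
Your proof is correct and is precisely the argument the paper has in mind: the paper simply declares the lemma ``immediate'' from the definition of $S_{a,b}$ and Property (e), and your decomposition of $S_{a,b}(y+\beta z)=w_1+\beta w_2$ into the simultaneous system $T(a,y,z)=w_1$, $T(b,y,z)=w_2$ resolved by Property (e) is exactly that omitted argument.
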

Finally we move to consider how Property (b) impacts the form of the PTR
polynomial.
We have already seen how Property (a) alone isolates the behaviour of $\odot$,
see (\ref{odotop}) above.
One interesting outcome of combining Properties (a) and (b) is that the
behaviour of $\oplus$ is also isolated.
\begin{lem} \label{PAB}
Suppose $T\in\ffxyz{q}$ satisfies Properties (a) and (b).
Then $T$ has the shape (\ref{aneqn}) and
\begin{equation*}
\sum_{i=1}^{q-1} c_{ij} =
\sum_{i=1}^{q-1} c_{ji} =
\begin{cases}
1 &\text{ if $j=1$,}\\
0 &\text{ if $j>1$.}
\end{cases}
\end{equation*}
Moreover,
\begin{equation} \label{oplusop}
y\oplus z = T(1,y,z) = y+z + yz \, M_1(1,y,z)
\end{equation}
for all $y,z\in\ff{q}$.
\end{lem}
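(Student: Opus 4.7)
The plan is to combine Theorem \ref{PA} with Property (b). Since Property (a) holds by hypothesis, Theorem \ref{PA} applies and gives $T$ in the form (\ref{aneqn}), so in particular
\begin{equation*}
T(X,Y,0) = M_2(X,Y) = \sum_{i=1}^{q-1}\sum_{j=1}^{q-1} c_{ij} X^i Y^j.
\end{equation*}
All that remains is to tease out what Property (b) says about the coefficients $c_{ij}$, and then to substitute $X=1$ into (\ref{aneqn}) to identify $\oplus$.

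For the coefficient conditions, I would specialise $M_2$ one variable at a time. Setting $Y=1$ and using $T(X,1,0)=X$ from Property (b) gives
\begin{equation*}
X = M_2(X,1) = \sum_{i=1}^{q-1} \Bigl(\sum_{j=1}^{q-1} c_{ij}\Bigr) X^i.
\end{equation*}
Because $M_2$ is reduced in $X$, this is a polynomial identity in $\ffx{q}$, so matching coefficients of $X^i$ yields $\sum_{j=1}^{q-1} c_{ij} = 1$ if $i=1$ and $0$ otherwise, which is the row-sum condition (the one written as $\sum_{i=1}^{q-1} c_{ji}$ in the lemma, after renaming the summation index). The symmetric argument using $T(1,Y,0)=Y$ and matching coefficients of $Y^j$ gives the column-sum condition $\sum_{i=1}^{q-1} c_{ij} = [j=1]$. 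Together these are the two displayed sum identities.

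For the moreover clause, I would substitute $X=1$ directly into (\ref{aneqn}):
\begin{equation*}
T(1,y,z) = z + yz\, M_1(1,y,z) + M_2(1,y).
\end{equation*}
Expanding $M_2(1,y) = \sum_{j=1}^{q-1} y^j \bigl(\sum_{i=1}^{q-1} c_{ij}\bigr)$ and applying the column-sum condition just established collapses all terms except $j=1$, giving $M_2(1,y) = y$. Substituting back yields $y \oplus z = T(1,y,z) = y + z + yz\, M_1(1,y,z)$, which is (\ref{oplusop}).

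There is no real obstacle here: the argument is a direct bookkeeping exercise on the reduced form (\ref{aneqn}). The only subtlety worth flagging is the appeal to uniqueness of reduced polynomial representations when matching coefficients, which is what lets us pass from a functional identity like $T(X,1,0) = X$ on $\ff{q}$ to an identity of formal polynomials and hence to equations on the $c_{ij}$.
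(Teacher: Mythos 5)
Your proof is correct and follows essentially the same route as the paper's: the paper likewise derives the two coefficient conditions from the polynomial identities $T(X,1,0)=X$ and $T(1,Y,0)=Y$ applied to the form (\ref{aneqn}), and obtains (\ref{oplusop}) by substituting $X=1$ and using $M_2(1,y)=y$. Your version just spells out the coefficient-matching and the appeal to uniqueness of reduced representations, which the paper leaves implicit.
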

\begin{proof}
From Property (b), we know $T(X,1,0)=X$.
Combining this polynomial identity with (\ref{aneqn}) forces the first
set of conditions on the coefficients, while using $T(1,Y,0)=Y$ forces the
second set.
In addition, applying $T(1,y,0)=y$ to (\ref{aneqn}), we also find
$T(1,y,z) = y+z + yz \, M_1(1,y,z)$, as claimed.
\end{proof}
Now, if we combine all of the above, we obtain the following result about
PTR polynomials, the proof of which is immediate from the above statements.
\begin{thm} \label{ptrpolyform}
Suppose $T(X,Y,Z)$ is a PTR polynomial over $\ff{q}$.
Then
\begin{equation} \label{aneqn2}
T(X,Y,Z) = Z + XYZ\, M_1(X,Y,Z) + M_2(X,Y),
\end{equation}
with
\begin{align*}
M_1(X,Y,Z) &= \sum_{i=0}^{q-3} \sum_{j=0}^{q-3} \sum_{k=0}^{q-3}
b_{ijk} X^iY^jZ^k,\\
M_2(X,Y) &= \sum_{i=1}^{q-2} \sum_{j=1}^{q-2} c_{ij} X^i Y^j.
\end{align*}
In addition, $T$ is linear if and only if for all $x,y,z\in\ff{q}$, $z\ne 0$,
we have
\begin{equation} \label{keyidentity}
xy\, M_1(x,y,z) = M_2(x,y) M_1(1,M_2(x,y),z).
\end{equation}
\end{thm}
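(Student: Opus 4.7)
The plan is to splice together Theorem \ref{PA}, Corollary \ref{PACDE}, and Lemma \ref{PAB} for the shape of $T$, then unfold the definition of linearity for the final equivalence; no genuinely new argument is required.

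First I would record the shape. A PTR polynomial satisfies all of Properties (a)--(e), so Theorem \ref{PA} applies and already yields the decomposition $T(X,Y,Z) = Z + XYZ\,M_1(X,Y,Z) + M_2(X,Y)$, with $M_1$ reduced (degrees at most $q-2$ per variable) and $M_2(X,Y) = \sum_{i,j=1}^{q-1} c_{ij}X^iY^j$. To tighten the index ranges, I would invoke Corollary \ref{PACDE}, which guarantees that $T$ has degree at most $q-2$ in each of $X,Y,Z$. Reading this condition off the three summands variable by variable: the $Z$-degree bound forces $\deg_Z M_1 \le q-3$; the $X$-degree bound forces both $\deg_X M_1 \le q-3$ (because of the $XYZ$ prefactor) and $\deg_X M_2 \le q-2$; and symmetrically for $Y$. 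These are exactly the index ranges appearing in (\ref{aneqn2}).

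For the linearity criterion, I would unfold the definition: $T$ is linear if and only if $T(x,y,z) = (x\odot y)\oplus z$ for all $x,y,z\in\ff{q}$. By (\ref{odotop}) of Theorem \ref{PA}, $x\odot y = M_2(x,y)$, and by (\ref{oplusop}) of Lemma \ref{PAB}, $u \oplus z = u + z + uz\,M_1(1,u,z)$ for any $u,z\in\ff{q}$. Substituting $u = M_2(x,y)$ into the latter and equating with $T(x,y,z) = z + xyz\,M_1(x,y,z) + M_2(x,y)$ from (\ref{aneqn2}), the summands $M_2(x,y)$ and $z$ cancel on both sides, leaving the condition
\begin{equation*}
xyz\,M_1(x,y,z) = z\,M_2(x,y)\,M_1(1, M_2(x,y), z)
\end{equation*}
as an identity of functions on $\ff{q}^3$. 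The $z=0$ case is automatic (both sides vanish), and dividing by $z$ in the remaining case reproduces (\ref{keyidentity}) exactly as claimed.

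The whole argument is essentially substitution and bookkeeping, so there is no genuine obstacle. The step that most deserves care is the degree tightening: one should verify variable by variable why the prefactor $XYZ$ in the middle term pushes each degree of $M_1$ down to $q-3$, whereas the absence of any prefactor on $M_2$ lets it retain degrees up to $q-2$.
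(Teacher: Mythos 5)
Your proposal is correct and follows exactly the route the paper intends: the paper's own ``proof'' is the single remark that the theorem is immediate from combining Theorem \ref{PA}, Corollary \ref{PACDE}, and Lemma \ref{PAB}, and your writeup simply makes that splicing explicit (including the harmless observation that no cancellation can occur among the three summands, since the monomials of $XYZ\,M_1$ all involve $Z$ and those of $M_2$ do not). Nothing further is needed.
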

We get an immediate corollary which extends Lemma \ref{PAB} for linear PTR
polynomials.
\begin{cor}
For a linear PTR polynomial $T\in\ffxyz{q}$ of the form (\ref{aneqn2}),
we have
\begin{equation*}
\sum_{i=0}^{q-3} b_{ijk} = \sum_{i=0}^{q-3} b_{jik}
\end{equation*}
for all $0\le j\le q-3$ and $1\le k\le q-3$.
\end{cor}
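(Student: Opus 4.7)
The plan is to specialise the linearity identity (\ref{keyidentity}) from Theorem \ref{ptrpolyform} by setting $y = 1$. From Property (b) applied to (\ref{aneqn2}) we have $M_2(x, 1) = T(x, 1, 0) = x$, so the right-hand side of (\ref{keyidentity}) collapses to $x\, M_1(1, x, z)$ and the identity becomes
\[ x\, M_1(x, 1, z) = x\, M_1(1, x, z) \]
for all $x \in \ff{q}$ and all $z \in \ffs{q}$. Cancelling $x$ in the regime $x \ne 0$ yields the pointwise equality $M_1(x, 1, z) = M_1(1, x, z)$ on $\ffs{q} \times \ffs{q}$.

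Next I would lift this pointwise equality to an identity of reduced polynomials. Set $g(X, Z) = M_1(X, 1, Z) - M_1(1, X, Z)$; by Theorem \ref{ptrpolyform}, $M_1$ is reduced with every variable of degree at most $q - 3$, and so $g \in \ffxy{q}$ likewise satisfies $\deg_X g, \deg_Z g \le q - 3$. Fixing any $z_0 \in \ffs{q}$, the univariate polynomial $g(X, z_0)$ has degree at most $q - 3$ yet vanishes at all $q - 1$ elements of $\ffs{q}$; since $q - 3 < q - 1$, it must be identically zero. Writing $g(X, Z) = \sum_{i=0}^{q-3} h_i(Z)\, X^i$, this forces each $h_i(Z)$ to vanish at every nonzero $z_0$, and the same degree-versus-roots argument gives $h_i \equiv 0$. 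Hence $g = 0$ as an element of $\ffxy{q}$.

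The conclusion then follows from a coefficient comparison. Expanding both sides of $M_1(X, 1, Z) = M_1(1, X, Z)$ using the triple-sum description from Theorem \ref{ptrpolyform} yields
\[ \sum_{i,k=0}^{q-3} \Bigl(\sum_{j=0}^{q-3} b_{ijk}\Bigr) X^i Z^k \;=\; \sum_{i,k=0}^{q-3} \Bigl(\sum_{j=0}^{q-3} b_{jik}\Bigr) X^i Z^k, \]
and matching the coefficient of $X^j Z^k$ (after the obvious relabelling of dummy indices) gives the claimed identity. No real obstacle arises; the only mildly delicate step is the pointwise-to-polynomial lifting, and the reducedness hypothesis on $M_1$ — giving a $q-3$ degree bound against $q - 1$ available roots — makes it automatic.
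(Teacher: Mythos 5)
Your proof is correct and follows the paper's own route: substitute $y=1$ into (\ref{keyidentity}), use $M_2(x,1)=x$ to collapse the right-hand side, and lift the resulting pointwise identity $M_1(x,1,z)=M_1(1,x,z)$ to a polynomial identity before comparing coefficients. Your root-counting justification of the lifting step (degree at most $q-3$ in each variable against $q-1$ nonzero roots) makes explicit what the paper leaves as a one-line remark, and correctly handles the restrictions $x\ne 0$, $z\ne 0$ inherited from (\ref{keyidentity}); indeed it shows the coefficient identity holds for $k=0$ as well.
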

The result follows by substituting $y=1$ into (\ref{keyidentity}), whereby one
obtains $xM_1(x,1,z)=xM_1(1,x,z)$ for all $x,z\in\ff{q}$.
This can be viewed as a polynomial equation in $X,Z$ and the statement of the
corollary follows.

\section{On the Lenz-Barlotti classification and coordinatisation}
\label{LBsectionpt1}

Let $\pplane$ be a projective plane and $\Gamma$ denote the full
collineation group of $\pplane$.
If a collineation fixes a line $\lne$ pointwise and a point $\pta$ linewise,
then it is called a {\em central} collineation, and $\lne$ and $\pta$ are
called the {\em axis} and {\em center} of the collineation, respectively.
It is well known that every central collineation in $\Gamma$ has a unique
center $\pta$ and unique axis $\lne$.
Let $\Gamma(\pta,\lne)$ be the subgroup of $\Gamma$ consisting of all
central collineations of $\pplane$ with center $\pta$ and axis $\lne$.
The plane $\pplane$ is said to be {\em $(\pta,\lne)$-transitive} if for every
two distinct points $\ptb,\ptc$ that are (a) collinear with $\pta$ but not
equal to $\pta$, and (b) not on $\lne$, there exists a necessarily unique
collineation $\gamma\in\Gamma(\pta,\lne)$ which maps $\ptb$ to $\ptc$.
Now let $\mne$ be a second line of $\pplane$, not necessarily distinct from
$\lne$.
If $\pplane$ is $(\pta,\lne)$-transitive for all $\pta\in\mne$, then
$\pplane$ is said to be {\em $(\mne,\lne)$-transitive}; the concept of {\em
$(\pta,\ptb)$-transitivity} is defined dually.
If $\pplane$ is $(\lne,\lne)$-transitive, then $\lne$ is called a {\em
translation line} and $\pplane$ is called a {\em translation plane} with
respect to the line $\lne$.
The definitions of {\em translation point} and {\em dual translation plane} are
defined dually also.

The Lenz-Barlotti (LB) classification for projective planes is based on the
possible sets
$$\T=\{ (\pta,\lne) \,:\, \pplane \text{ is $(\pta,\lne)$-transitive} \}$$
of point-line transitivities that the full collineation group of a plane can
exhibit.
Developed by Lenz \cite{len54} and refined by Barlotti \cite{bar57}, the
classification has a heirarchy of types, starting with little to no point-line
transitivities in types I and II, through to type VII.2, which represents the
Desarguesian plane and where $\T$ consists of every possible point-line flag.
There are no type VI planes at all -- the type arises naturally in the study
of potential permutation groups, but no plane can exist of this type.
For any LB type where a finite example is known, one can also find an infinite
example.
The converse is not true; infinite examples of types III.1, III.2 and VII.1 are
known, while it can be shown that finite examples of each of these types are
impossible -- in the case of type VII.1, this is due to the
Artin-Zorn Theorem which states any finite alternative division ring is a
field, see \cite{bhughes73}, Theorem 6.20; type III.1
was ultimately resolved by Hering and Kantor \cite{hering71} and type III.2 was
completed by L{\" u}neberg \cite{lun65} and Yaqub \cite{yaq67}.
It should be noted that several finite cases remain open --
the question of
existence of finite projective planes of LB types I.2, I.3, I.4 and
II.2 remains unresolved.

Our motivation for discussing the Lenz-Barlotti types of projective planes
is made clear when we return to considering the coordinatisation of planes.
In parallel with the Lenz-Barlotti classification, there is a corresponding
structural heirarchy for properties of PTRs as one ascends through the
Lenz-Barlotti types, though one now assumes that the coordinatisation is done
in such a fashion so that the resulting PTR exhibits the most structure.
In LB type I.1, the PTR has no additional structure beyond Lemma \ref{hplem}.
All other planes can be coordinatised to produce a linear PTR.
A LB type II plane can be coordinatised to produce a PTR $T$ which is linear
and where $\oplus$ is associative (so $\oplus$ describes a group operation on
the coordinatising set $\R$).
Any plane which is at least LB type IV is a translation plane.
LB type IV planes can be coordinatised to produce quasifields, LB type V planes
can produce semifields, and the Desarguesian case, of course, can produce a
field.
More specifically, we can say the following.
\begin{lem} \label{lblem}
The following statements hold.
\begin{enumerate}
\renewcommand{\labelenumi}{(\roman{enumi})}

\item
A plane $\pplane$ which is {\em only} $((0),[0])$-transitive is necessarily
LB type I.2.
The plane $\pplane$ is $((0),[0])$-transitive if and only if it can be
coordinatised by a linear PTR with associative multiplication $\odot$.
In such cases, $\Gamma((0),[0])$ is isomorphic to the group described by
$\odot$.
Moreover. during coordinatisation, $\ptx$ is chosen to be the point $(0)$.

\item
A plane $\pplane$ which is {\em only} $((0),[0])$-transitive and
$((\infty),[0,0])$-transitive is necessarily LB type I.3.
The plane $\pplane$ is $((0),[0])$-transitive and $((\infty),[0,0])$-transitive
if and only if it can be coordinatised by a linear PTR with associative
multiplication $\odot$ and displaying a left distributive law.

\item
A plane $\pplane$ which is $((\infty),[\infty])$-transitive is necessarily 
LB type at least II.
The plane $\pplane$ is $((\infty),[\infty])$-transitive if and only if 
it can be coordinatised by a linear PTR with associative addition $\oplus$.
In such cases, $\Gamma((\infty),[\infty])$ is isomorphic to the group described
by $\oplus$.
Moreover, during coordinatisation, $\pty$ is chosen to be the point $(\infty)$.

\item
A plane $\pplane$ which is a translation plane or dual translation plane is
necessarily Lenz-Barlotti type at least IV.
The plane $\pplane$ is a translation plane (resp. dual translation plane) if
and only if it can be coordinatised by a linear PTR with associative addition
$\oplus$ and a right distributive law $(x\oplus y)\odot z=x\odot z + y\odot z$
(resp. a left distributive law $x\odot (y\oplus z)=x\odot y + x\odot z$).
In such cases, the order of $\pplane$ must be a prime power $q$ and the group
described by $\oplus$ is elementary abelian.
Moreover, during coordinatisation, $\overline{\ptx\pty}$ is the translation
line (resp. $\pty$ is the translation point).

\item
A plane $\pplane$ which is both a translation plane and a dual translation
plane (so $[\infty]$ is a translation line and $(\infty)$ is a translation
point) is necessarily Lenz-Barlotti type at least V.
The plane $\pplane$ is LB type at least V if and only if it can be
coordinatised by a linear PTR with associative addition $\oplus$ and both a
left and right distributive law.
In such cases, the order of $\pplane$ must be a prime power $q$ and the group
described by $\oplus$ is elementary abelian.
Moreover, during coordinatisation, the $\overline{\ptx\pty}$ is the translation
line and $\pty$ is the translation point.
\end{enumerate}
\end{lem}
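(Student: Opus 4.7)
The plan is to treat all five parts by the same unified strategy: couple each prescribed $(\pta,\lne)$-transitivity with a choice of reference quadrangle that puts $\pta$ and $\lne$ among the coordinate objects singled out in the coordinatisation process, and then translate the resulting collineations into algebraic constraints on the PTR by tracing their action on the vertical line $\overline{\pto\pty}$ using the diagrams of Figures 4 and 5. The forward direction of each ``if and only if'' reads off algebraic structure from the transitivity; the backward direction defines candidate central collineations explicitly from the algebra and checks line-preservation against the linear PTR formula $T(m,x,y)=(m\odot x)\oplus y$.

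As a blueprint, consider (iii). Choose $\pty=(\infty)$. The hypothesis provides, for each $a\in\R$, a unique $\tau_a\in\Gamma((\infty),[\infty])$ with $\tau_a(\pto)=(0,a)$. Since $\tau_a$ fixes $[\infty]$ pointwise and every line through $(\infty)$ (that is, each vertical line $[c]$ and $[\infty]$ itself) setwise, chasing the coordinatisation shows $\tau_a(x,y)=(x,y\oplus a)$. The composition law $\tau_b\circ\tau_a=\tau_{a\oplus b}$ forces $\oplus$ to be associative and identifies $\Gamma((\infty),[\infty])$ with $(\R,\oplus)$ as groups. Linearity emerges by noting $\tau_a$ sends $[m,0]$ to the unique line through $(m)$ and $(0,a)$, which must be $[m,a]$; the corresponding identity $T(m,x,y\oplus a)=T(m,x,y)\oplus a$ combined with Property (b) recovers $T(m,x,y)=(m\odot x)\oplus y$. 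The converse defines $\tau_a(x,y):=(x,y\oplus a)$ and verifies collineation-hood using only associativity of $\oplus$ and linearity of $T$.

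The remaining parts follow the same template with the appropriate dictionary. For (i), center $\ptx$ and axis $\overline{\pto\pty}$ put the $((0),[0])$-collineations in correspondence with the multiplicative loop via the Figure 5 construction: $\sigma_a(x,y)=(a\odot x, y)$, and associativity of $\odot$ follows from composition. Part (ii) adds $((\infty),[0,0])$-transitivity; combined with (i), the resulting homologies interchange along both coordinate axes and force the left distributive law. Parts (iv) and (v) bootstrap on (iii) by adding further transitivities: a translation line supplies a transitive family of central collineations whose centers vary along $[\infty]$, and the forced commutation of these collineations makes $(\R,\oplus)$ abelian and produces right (resp.\ left) distributivity; the elementary-abelian statement and prime-power order then invoke the classical theorem on finite translation groups of affine planes (Hughes--Piper \cite{bhughes73}, Chapter 6). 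The main obstacle is not conceptual but is the systematic bookkeeping needed to verify, for each of the five claims, that the candidate collineations built from the algebraic data really do act as claimed on every point and line of $\pplane$; one must be especially careful in the ``only if'' clauses to ensure the loop operations extracted from the plane coincide exactly with the hypothesised operations on $\R$ rather than with a twist.
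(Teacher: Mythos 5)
The paper offers no proof of this lemma at all: immediately after the statement it simply says ``These results come from \cite{bdembowski68}, Chapter 3, and \cite{bhughes73}, Chapters 5 and 6,'' so there is nothing internal to compare your argument against. What you have written is a faithful outline of the standard arguments in exactly those references --- the dictionary between central collineations and algebraic identities in the PTR, with the collineations realised explicitly as $(x,y)\mapsto(x,y\oplus a)$ for elations with centre $(\infty)$ and axis $[\infty]$, and analogously for the homologies with centre $(0)$ and axis $[0]$. Your blueprint for (iii) is sound: the composition law $\tau_b\circ\tau_a=\tau_{a\oplus b}$ does give associativity and the isomorphism $\Gamma((\infty),[\infty])\cong(\R,\oplus)$, and tracing $[m,k]\mapsto[m,k\oplus a]$ does give $T(m,x,y\oplus a)=T(m,x,y)\oplus a$, whence linearity on setting $y=0$. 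Two caveats. First, your sketch proves only the ``if and only if'' clauses and the group identifications; it does not touch the classification claims (``necessarily LB type I.2,'' ``at least IV,'' etc.), which are not consequences of the collineation-to-algebra dictionary but of Barlotti's enumeration of the admissible transitivity sets $\T$ --- you would need to import that enumeration separately, just as the paper implicitly does by citation. Second, in (i) you should pin down on which side the homology acts (whether $\sigma_a(x,y)=(x\odot a,y)$ or $(a\odot x,y)$ depends on the Hughes--Piper conventions, and getting it wrong flips which distributive law appears in (ii) and (iv)); this is precisely the kind of ``twist'' you warn about in your last sentence, and it is the one place where the bookkeeping is not merely routine. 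With those two points attended to, your route is the proof the cited sources give, written out rather than cited.
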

These results come from \cite{bdembowski68}, Chapter 3, and \cite{bhughes73},
Chapters 5 and 6, and we refer the reader to these references for further
information on the Lenz-Barlotti classification and the corresponding
properties of PTRs.

This leaves open one obvious question, that of how to coordinatise a plane
optimally.
Lemma \ref{lblem} makes clear the following strategy to be used during the
coordinatisation process:
\begin{itemize}
\item If $\T$ contains an incident point-line flag, one such flag
must be $((\infty),[\infty])$.
\item If $\T$ contains a non-incident point-line flag, one such flag
must be $((0),[0])$.
\end{itemize}
Unless the plane is LB type I.1, at least one, and possibly both, of these
strategems can be met during the initiation phase of the
coordinatising process, when one chooses the triangle $\pto\ptx\pty$.
{\em In the following, we assume that the planes have been coordinatised
optimally with respect to the properties exhibited by the PTR, and in
accordance with the above strategy.}
As part of such an ``optimising" strategy, we prioritise associativity of the
operations $\oplus$ and $\odot$ of the PTR over distributivity whenever there
is such a choice available.

This optimal coordinatisation can be exploited even further through the use of
Figures 4 or 5.
For example, if $\pplane$ is $((\infty),[\infty])$-transitive and the group
$\Gamma((\infty),[\infty])$ is known, one can use that group as the labelling
set and use Figure 4 to ensure that $\oplus$ is actually the operation of the
group.
Likewise, if the plane is $((0),[0])$-transitive and the group
$\Gamma((0),[0])$ is known, one can use that group, along with an additional
element 0, as the labelling set and use Figure 5 to ensure that $\odot$ is
actually the operation of the group.
It is for this specific reason that we have taken such care in describing the
coordinatisation method and the actions of the two loops on the vertical line
in Section 2 -- if these actions were not able to be described explicitly, then
one could not pursue the optimal coordinatising strategy we've outlined. 

Linking these optimising strategies to PTR polynomials, the most obvious cases
we might be interested in is when either $\Gamma((\infty),[\infty])$ is
elementary abelian, or when $\Gamma((0),[0])$ is cyclic.
In the former case, through optimal coordinatisation, we can assume $\oplus$ 
is field addition, while in the latter case, we can force $\odot$ to be field
multiplication through coordinatising optimally.
(It should be noted that one cannot simultaneously assume optimal
coordinatisation for both $\oplus$ and $\odot$ as the labelling of the line
$\overline{\pto\pty}$ is determined by exactly one of Figures 4 or 5 in these
optimising strategies.)
In cases where neither of these conditions arise, a representation theory for
representing groups by polynomials is needed; such a theory was recently
developed by Castillo and the author, see \cite{coulter15}.

For the remainder of this article, we consider how knowing either $\oplus$ or
$\odot$ is a field operation affects the PTR polynomial.
We begin first with the case where $\oplus$ is assumed to be field addition --
this situation is actually quite common, especially in the study of semifields,
dating back to the first proper examples given by Dickson in \cite{dic06}.
In fact, if the plane is Lenz-Barlotti IV or higher, then you are guaranteed
that any optimal coordinatisation will force $\oplus$ to be field addition.
\begin{thm} \label{realaddition}
Let $\pplane$ be a projective plane of order $q=p^e$ for some prime $p$ which
is $((\infty),[\infty])$-transitive and where $\Gamma((\infty),[\infty])$ is
elementary abelian.
Suppose $T\in\ffxyz{q}$ is a PTR polynomial obtained from coordinatising
$\pplane$ optimally, so that the resulting additive loop is field addition.
\begin{enumerate}
\renewcommand{\labelenumi}{(\roman{enumi})}
\item If $\pplane$ is strictly LB type II.1, then 
\begin{equation} \label{basicform}
T(X,Y,Z)=M_2(X,Y)+Z,
\end{equation}
where $M_2(X,Y)$ is as in (\ref{aneqn2}).

\item If $\pplane$ is strictly LB type II.2, then 
$T\in\ffxyz{q}$ is of the shape (\ref{basicform}) and where
\begin{equation*}
M_2(x,M_2(y,z))=M_2(M_2(x,y),z)
\end{equation*}
for all $x,y,z\in\ff{q}$.

\item If $\pplane$ is a translation plane of LB type at least IV, then
$T\in\ffxyz{q}$ is of the shape (\ref{basicform}) and where
\begin{equation} \label{lbiv}
M_2(X,Y)=\sum_{i=0}^{e-1}\sum_{j=1}^{q-1} c_{ij} X^{p^i} Y^j.
\end{equation}

\item If $\pplane$ is a dual translation plane of LB type at least IV, then
$T\in\ffxyz{q}$ is of the shape (\ref{basicform}) and where
\begin{equation} \label{lbivd}
M_2(X,Y)=\sum_{i=1}^{q-1}\sum_{e=0}^{e-1} c_{ij} X^i Y^{p^j}.
\end{equation}

\item If $\pplane$ is LB type at least V, then $T\in\ffxyz{q}$ is of the shape
(\ref{basicform}) and where
\begin{equation} \label{lbv}
M_2(X,Y)=\sum_{i=0}^{e-1}\sum_{j=0}^{e-1} c_{ij} X^{p^i} Y^{p^j}.
\end{equation}

\end{enumerate}
\end{thm}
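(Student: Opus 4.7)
The plan is to leverage Lemma \ref{lblem} at each level of the Lenz--Barlotti hierarchy to extract an algebraic identity on the operations $\oplus,\odot$, and then translate that identity directly into a structural statement about the reduced polynomial $M_2(X,Y)$ appearing in the shape (\ref{aneqn2}). The common starting point for all five parts is that $\pplane$ is $((\infty),[\infty])$-transitive, so by Lemma \ref{lblem}(iii) an optimal coordinatisation produces a linear PTR. Combined with Theorem \ref{PA}, which gives $x\odot y = T(x,y,0) = M_2(x,y)$, and with the hypothesis that $x\oplus z = x+z$ in $\ff{q}$, linearity yields
\begin{equation*}
T(x,y,z) = (x\odot y)\oplus z = M_2(x,y) + z
\end{equation*}
for all $x,y,z\in\ff{q}$. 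This is precisely (\ref{basicform}), which handles (i). For (ii), strict LB type II.2 adds $((0),[0])$-transitivity, and Lemma \ref{lblem}(i) gives that $\odot$ is associative; rewriting $x\odot(y\odot z)=(x\odot y)\odot z$ with $M_2$ in place of $\odot$ gives the asserted associativity identity on $M_2$.

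For parts (iii)--(v), I use the distributive laws from Lemma \ref{lblem}(iv)--(v). If $\pplane$ is a translation plane of LB type at least IV, the right distributive law $(x\oplus y)\odot z = x\odot z + y\odot z$ becomes
\begin{equation*}
M_2(x+y,z) = M_2(x,z) + M_2(y,z) \quad \text{for all } x,y,z\in\ff{q}.
\end{equation*}
Thus for every fixed $z\in\ff{q}$, the univariate reduced polynomial $M_2(X,z)\in\ffx{q}$ induces an additive map $\ff{q}\to\ff{q}$. The standard structure theorem for additive polynomials says the only reduced univariate polynomials inducing additive maps on $\ff{q}$ are the $p$-polynomials $\sum_{i=0}^{e-1}a_i X^{p^i}$. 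Writing $M_2(X,Y)=\sum_{j} g_j(Y)X^j$ with $g_j\in\ffx{q}$ of degree less than $q$, this forces $g_j(z)=0$ for every $z\in\ff{q}$ whenever $j$ is not among $\{p^0,p^1,\ldots,p^{e-1}\}$. A polynomial in $Y$ of degree at most $q-1$ vanishing on all of $\ff{q}$ is identically zero, so those $g_j$ vanish and (\ref{lbiv}) follows. Part (iv) is the dual argument using the left distributive law obtained from the dual translation hypothesis. For (v), both distributive laws apply, so $M_2$ is additive separately in each variable; applying the previous argument in $X$ and then in $Y$ to each surviving coefficient yields (\ref{lbv}).

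There is no single hard step: the proof is a cascade of applications of Lemma \ref{lblem} followed, at each level, by a direct algebraic translation. The only mildly delicate point is the passage from the \emph{pointwise} identities on $\ff{q}^3$ (associativity, additivity) to genuine \emph{polynomial} identities in the reduced representatives --- but this is automatic because reduced polynomials over $\ff{q}$ in $n$ variables are in bijection with functions $\ff{q}^n\to\ff{q}$, so a pointwise identity is a polynomial identity in the reduced ring. A minor bookkeeping issue is reconciling the degree bounds in (\ref{lbiv})--(\ref{lbv}) with the bounds $1\le i,j\le q-2$ from Theorem \ref{ptrpolyform}; additivity in a variable subsumes that restriction automatically in the relevant exponents, so no conflict arises.
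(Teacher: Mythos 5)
Your proposal is correct and follows essentially the same route as the paper: invoke Lemma \ref{lblem} for linearity and the relevant associativity/distributivity identities at each LB level, then translate these pointwise identities into statements about the reduced polynomial $M_2$ (with the linearised-polynomial characterisation of additive maps handling parts (iii)--(v)). Your derivation of (\ref{basicform}) directly from $T(x,y,z)=(x\odot y)\oplus z=M_2(x,y)+z$ is marginally cleaner than the paper's appeal to $M_1=0$ via (\ref{oplusop}) and Theorem \ref{ptrpolyform}, but it is the same argument in substance.
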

\begin{proof}
By our hypotheses, the plane $\pplane$ is necessarily LB type at least II.1,
and $y\oplus z=y+z$, so that in (\ref{oplusop}) we see $M_1=0$.
The claim of (i) now follows at once from Theorem \ref{ptrpolyform}.
Extending to LB type II.2 is immediate from the fact that, in an optimal
coordinatisation, the plane will be both $((\infty),[\infty])$-transitive and
$((0),[0])$-transitive, and $x\odot y = M_2(x,y)$ will act isomorphically to
$\Gamma((0),[0])$. Thus the condition given on $M_2$ is nothing more than the
associative property of the operation $\odot$.

For (iii), Lemma \ref{lblem} tells us we must have Equation \ref{basicform}, as
well as a right distributive law.
Thus $M_2(X,Y)$ must satisfy $M_2(a+b,y)=M_2(a,y)+M_2(b,y)$ for all
$a,b,y\in\ff{q}$.
It follows at once that $M_2(X,Y)$ is a linearised polynomial in $X$.
Thus $M_2$ has the form claimed.
A similar argument deals with the case (iv).
The claims of (v) now follow at once as a LB type V plane is both
a translation plane and a dual translation plane.
\end{proof}
It is worth noting that whenever we consider a projective plane of LB type at
least IV, we are guaranteed that we can obtain a PTR polynomial of one of the
shapes (\ref{lbiv}), (\ref{lbivd}), or (\ref{lbv}), via Lemma \ref{lblem}.

A polynomial $f\in\ffx{q}$ is called a {\em complete mapping on $\ff{q}$} if
both $f(X)$ and $f(X)+X$ are PPs over $\ff{q}$.
Complete mappings and their extensions have been studied in several situations.
For example, they are connected to the construction of latin squares.
Our next result shows how complete mappings arise
completely naturally and in numbers when we look at PTR polynomials.
\begin{lem}
Let $\pplane$ be a projective plane of order $q=p^e$ for some prime $p$ which
is $((\infty),[\infty])$-transitive and where $\Gamma((\infty),[\infty])$ is
elementary abelian.
Suppose $T\in\ffxyz{q}$ is a PTR polynomial obtained from coordinatising
$\pplane$ optimally, so that the resulting additive loop is field addition.
Then, for any $a\in\ff{q}\setminus\{0,1\}$, the polynomial $f_a(X)=M_2(X,a)-X$,
is a complete mapping on $\ff{q}$.
\end{lem}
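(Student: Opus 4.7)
The plan is to verify each of the two permutation conditions defining a complete mapping, using the form of $T$ from Theorem \ref{realaddition}(i) together with the PTR properties of Lemma \ref{hplem}.

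First, I would observe that the hypotheses of Theorem \ref{realaddition}(i) are in force, so $T(X,Y,Z) = M_2(X,Y) + Z$. In particular, $x \odot y = T(x,y,0) = M_2(x,y)$, and from Property (b) of Lemma \ref{hplem} we have $M_2(x,1) = T(x,1,0) = x$ and $M_2(1,y) = T(1,y,0) = y$.

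Next, I would show that $f_a(X) + X = M_2(X,a)$ is a permutation polynomial over $\ff{q}$. Since $a \neq 0$, the map $x \mapsto x \odot a = M_2(x,a)$ is a permutation of $\ff{q}$: it is a bijection on $\ffs{q}$ because $\odot$ is a loop on $\ffs{q}$, and $M_2(0,a) = T(0,a,0) = 0$ by Property (a), so $0$ is fixed.

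The main step is then to show $f_a(X) = M_2(X,a) - X$ is a permutation polynomial. I would fix $c \in \ff{q}$ and seek a unique $x$ with $M_2(x,a) - x = c$. Rewriting, this is $M_2(x,a) = x + c$. Using $M_2(x,1) = x$, the right-hand side equals $M_2(x,1) + c = T(x,1,c)$, while the left-hand side is $T(x,a,0)$. Thus the equation becomes
\begin{equation*}
T(x,a,0) = T(x,1,c).
\end{equation*}
Since $a \neq 1$ by hypothesis, Property (c) of Lemma \ref{hplem} guarantees a unique such $x$. Hence $f_a$ is a permutation polynomial, and combined with the previous paragraph, $f_a$ is a complete mapping.

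There is no real obstacle here; the only subtle point is recognising that the two assumptions $a \neq 0$ and $a \neq 1$ play different roles — the first is needed for the multiplicative loop to act as a permutation on $\ffs{q}$, while the second is exactly what allows us to invoke Property (c) in its nondegenerate form. The result may be viewed as a structural reason why complete mappings necessarily appear in abundance whenever one has a PTR polynomial with field addition as its additive loop.
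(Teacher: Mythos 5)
Your proof is correct and follows essentially the same route as the paper: the key step, deducing that $M_2(x,a)-x=c$ has a unique solution from Property (c) applied to $T(x,a,0)=T(x,1,c)$ together with $M_2(x,1)=x$ from Property (b), is exactly the paper's argument. The only cosmetic difference is that for $f_a(X)+X=M_2(X,a)$ you invoke the loop structure of $\odot$ directly, whereas the paper cites Theorem \ref{PPthm}(i); these are equivalent justifications.
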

\begin{proof}
By Theorem \ref{realaddition}, we know $T(X,Y,Z)=M_2(X,Y)+Z$.
We now appeal to Properties (b) and (c).
By Property (c), for $a,b,c,d\in\ff{q}$ with $a\ne c$, there exists a unique
$x$ satisfying $M(x,a)+b=M(x,c)+d$.
Setting $b=0$, $c=1$ and appealing to Property (b), we find for all $a\ne 1$,
$M(x,a)-M(x,1)=M(x,a)-x=d$ has a unique solution in $x$ for any $d$.
Thus $f_a(X)=M(X,a)-X$ is a permutation polynomial over $\ff{q}$ for all
$a\ne 1$.
Additionally, $f_a(X)+X=M(X,a)=T(X,a,0)$ is a permutation polynomial for all
$a\ne 0$ by Theorem \ref{PPthm} (i).
\end{proof}

It remains to consider what can be said about PTR polynomials when we know
$\odot$ coincides with field multiplication.
Our initial assumption, then, must be that the plane is at least
$((0),[0])$-transitive.
We note that in this case, by starting with a finite projective plane with a
non-incident flag transitivity, the only LB types possible are I.2, I.3, I.4,
II.2, the planar nearfields of type IV, or VII.2
We may ignore the planar nearfields case, as the multiplicative groups involved
in that case are necessarily non-abelian, so can never be cyclic.
Since II.2 strictly contains only I.2, in the heirarchy of LB types under
consideration, we have two distinct strings:
\begin{itemize}
\item I.2 $\subseteq$ I.3 $\subseteq$ I.4 $\subseteq$ VII.2, and
\item I.2 $\subseteq$ II.2 $\subseteq$ VII.2.
\end{itemize}
Furthermore, it was shown by Ghinelli and Jungnickel \cite{ghinelli03} that I.3
and I.4 planes correspond to the non-abelian and abelian case, respectively, of
the same existence problem for neo-difference sets.
In particular, by assuming $x\odot y=xy$, when we come to consider classes I.3
and I.4, we are enforcing the abelian case; this is why LB type I.3 does not
occur in the following statement.
\begin{thm} \label{realmultiplication}
Let $\pplane$ be a projective plane of order $q=p^e$ for some prime $p$ which
is $((0),[0])$-transitive and where $\Gamma((0),[0])$ is cyclic.
Suppose $T\in\ffxyz{q}$ is a PTR polynomial obtained from coordinatising
$\pplane$ optimally, so that the resulting multiplicative loop is field
multiplication.
\begin{enumerate}
\renewcommand{\labelenumi}{(\roman{enumi})}
\item If $\pplane$ is strictly LB I.2, then 
\begin{equation} \label{lbi2eq}
T(X,Y,Z)= Z + XY + XYZ\, M_1(X,Y,Z),
\end{equation}
where
\begin{equation*}
M_1(X,Y,Z)=\sum_{i,j=0}^{q-3} b_{ij} (XY)^i Z^j.
\end{equation*}

\item If $\pplane$ is strictly LB I.4, then $T$ is of the shape (\ref{lbi2eq})
and where
\begin{equation*}
M_1(X,Y,Z)=\sum_{i=0}^{q-3} b_i (XY)^i Z^{q-2-i}.
\end{equation*}

\item If $\pplane$ is strictly LB II.2, then $T$ is of the shape (\ref{lbi2eq})
and where
\begin{align*}
yz+xy\, M_1(1,x,y)&\left(1+z\, M_1(1,x+y+xy\, M_1(1,x,y),z)\right)\\
&= xy+yz\, M_1(1,y,z)\left(1+x\, M_1(1,x,y+z+yz\, M_1(1,y,z))\right)
\end{align*}
for all $x,y,z\in\ff{q}$.
\end{enumerate}
\end{thm}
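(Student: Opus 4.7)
The strategy is to take the general PTR-polynomial shape from Theorem~\ref{ptrpolyform} and then, for each part, layer on the algebraic conditions that Lemma~\ref{lblem} attaches to the respective Lenz--Barlotti type under optimal coordinatisation.

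For part (i), the hypothesis together with optimal coordinatisation makes $\odot$ coincide with field multiplication, so $M_2(X,Y) = XY$ in (\ref{aneqn2}). By Lemma~\ref{lblem}(i), the PTR is also linear, hence for all $x,y,z \in \ff{q}$,
\begin{equation*}
T(x,y,z) = (x \odot y) \oplus z = xy \oplus z = xy + z + xyz\, M_1(1, xy, z),
\end{equation*}
using (\ref{oplusop}). Matching this against (\ref{aneqn2}) gives $M_1(x,y,z) = M_1(1, xy, z)$ for all $x,y,z \in \ffs{q}$. Both sides are reduced polynomials of degree at most $q-3$ in each variable, so a standard variable-by-variable argument (a non-zero reduced polynomial of degree at most $q-3$ in a single variable cannot vanish on the full set $\ffs{q}$ of size $q-1$) lifts this to an identity of polynomials, from which the claimed shape of $M_1$ follows by relabelling coefficients.

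For part (iii), strict LB II.2 further supplies $((\infty),[\infty])$-transitivity, so by Lemma~\ref{lblem}(iii) the loop $\oplus$ is associative. Writing $h(u,v) = u + v + uv\, M_1(1,u,v)$ for the explicit form of $u \oplus v$, I would expand $(x \oplus y) \oplus z = h(h(x,y), z)$ and $x \oplus (y \oplus z) = h(x, h(y,z))$, set them equal, cancel the common additive part, and rearrange the remaining terms into the identity stated in the theorem. This reduces to routine polynomial bookkeeping once the right substitutions are made.

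Part (ii) is the delicate step and where I expect the real work to sit. Strict LB I.4 imposes, in addition to $((0),[0])$-transitivity, a further family of non-incident transitivities forming the abelian companion to the I.3 configuration (the cyclic hypothesis on $\Gamma((0),[0])$ ruling out the non-abelian I.3 alternative). The target condition on $M_1$ is equivalent, on $(\ffs{q})^3$, to the single identity $z\, M_1(x,y,z) = g(xy/z)$ for some univariate polynomial $g$; equivalently, $T(x,y,z) = z\, \phi(xy/z)$ on the affine part with $z \ne 0$. That shape is precisely what one would expect if the additional central collineations of LB I.4 act, after optimal coordinatisation, by simultaneous scalings of the $x$- and $z$-coordinates. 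The plan is to identify these collineations explicitly, verify they act by such a scaling on the coordinate labels, and then apply the same polynomial-lifting argument as in part (i) to convert the resulting scaling invariance on $(\ffs{q})^3$ into the coefficient constraint that the exponents of $XY$ and $Z$ sum to $q-2$. The obstacle is in making the first step rigorous: the optimal coordinatisation has been tuned so that $\odot$ is field multiplication, and one must check that the additional I.4 collineations still act cleanly in this coordinate system rather than through a twisted action that would disrupt the pattern.
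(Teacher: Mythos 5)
Your parts (i) and (iii) follow the paper's route: linearity plus $\odot$ being field multiplication gives $T(x,y,z)=(xy)\oplus z = xy+z+xyz\,M_1(1,xy,z)$, whence $M_1$ depends on $X,Y$ only through $XY$ (your reducedness argument lifting the identity from $(\ffs{q})^3$ to a polynomial identity is the right justification for the paper's terser ``define $b_{ij}=b_{iij}$''), and (iii) is indeed just associativity of $\oplus$ expanded through $u\oplus v=u+v+uv\,M_1(1,u,v)$.

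The gap is in part (ii), which you leave as a plan with an acknowledged obstacle rather than a proof. The idea you are missing is that no geometric analysis of the additional I.4 collineations is needed: Lemma \ref{lblem}(ii) already records that a plane with the I.3 transitivities (hence any strictly I.4 plane, since I.3 $\subseteq$ I.4 in the hierarchy) is optimally coordinatised by a linear PTR satisfying a \emph{left distributive law}. With $\odot$ equal to field multiplication this reads $x(y\oplus z)=xy\oplus xz$, i.e.
\begin{equation*}
xy+xz+xyz\,M_1(1,y,z)=xy+xz+x^2yz\,M_1(1,xy,xz),
\end{equation*}
so $xyz\,M_1(1,y,z)=x^2yz\,M_1(1,xy,xz)$ for all $x,y,z\in\ff{q}$. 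Writing $M_1(1,Y,Z)=\sum_{i,j=0}^{q-3}b_{ij}Y^iZ^j$ and comparing coefficients of $Y^{i+1}Z^{j+1}$ (legitimate, as all exponents stay below $q-1$) yields $b_{ij}x=b_{ij}x^{2+i+j}$ for every $x\in\ff{q}$, which forces $b_{ij}=0$ unless $1+i+j\equiv 0 \pmod{q-1}$, and the range $0\le i,j\le q-3$ then pins this down to $i+j=q-2$. That is exactly the single-index form claimed. Your proposed reformulation $z\,M_1(x,y,z)=g(xy/z)$ for $z\ne 0$ is a correct restatement of the target, but the collineation-chasing you outline to reach it is both harder and unnecessary; the left distributive law is precisely the algebraic shadow of those collineations and is already available from Lemma \ref{lblem}.
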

\begin{proof}
By hypothesis, $x\odot y=xy$, and Lemma \ref{lblem} tells us the PTR is linear.
Thus $T(x,y,z)=(xy)\oplus z$, and now an appeal to Theorem \ref{ptrpolyform}
produces the first claim, where we define $b_{ij}$ by $b_{ij}=b_{iij}$.

For the second, we use the fact the PTR polynomial $T$ obtained from optimal
coordinatisation must have a left distributive law.
Since $x(y\oplus z)=xy\oplus xz$ for all $x,y,z\in\ff{q}$, we have the
identity
\begin{equation*}
xyz\, M_1(1,y,z) = x^2 yz\, M_1(1,xy,xz)
\end{equation*}
for all $x,y,z$.
Now this equation has no higher powers of $y$ or $z$ beyond the $(q-2)$nd, and
so we can view this as a polynomial identity in $Y,Z$.
Equating coefficients, we find for all $x\in\ff{q}$ and all $0\le i,j\le q-3$,
\begin{equation*}
b_{ij} x = b_{ij} x^{2+i+j}.
\end{equation*}
Thus $b_{ij}=0$ unless $2+i+j=q$, which proves we may index the (potentially) 
non-zero coefficients by a single counter, and this yields the 2nd claim.

For (iii), the proof is essentially the same as for LB type II.2 in Theorem
\ref{realaddition}, in that we know $\oplus$ will be associative in an optimal
coordinatisation of the plane $\pplane$ and the condition on $M_1$ given above
is equivalent.
\end{proof}

%%\bibliographystyle{amsplain}
%%\bibliography{../../bib/filed,../../bib/nonfiled,../../bib/books}

\providecommand{\bysame}{\leavevmode\hbox to3em{\hrulefill}\thinspace}
\providecommand{\MR}{\relax\ifhmode\unskip\space\fi MR }
% \MRhref is called by the amsart/book/proc definition of \MR.
\providecommand{\MRhref}[2]{%
  \href{http://www.ams.org/mathscinet-getitem?mr=#1}{#2}
}
\providecommand{\href}[2]{#2}

\end{document}